\theoremstyle{definition}
\newtheorem{thm}{Theorem}[section]
\newtheorem{cor}[thm]{Corollary}
\newtheorem{lem}[thm]{Lemma}
\newtheorem{con}[thm]{Conjecture}
\newtheorem{exa}[thm]{Example}
\newtheorem{rmk}[thm]{Remark}
\newtheorem{prop}[thm]{Proposition}
\numberwithin{equation}{section}
\address{MIT, Dept. of Math.\\
77 Massachusetts Avenue, Cambridge, MA 02139-4307.}
\email{jchang61@mit.edu}
\title{1-dimensional solutions of the $\lambda$-self shrinkers}
\author{Jui-En Chang} 
\date{\today}
\begin{document}
\maketitle
\begin{abstract} 
We examine the solutions of 1-dimensional $\lambda$-self shrinkers and show that for certain $\lambda<0$, there are some closed, embedded solutions other than circles. For negative $\lambda$ near zero, there are embedded solutions with 2-symmetry. For negative $\lambda$ with large absolute value, there are embedded solutions with $m$-symmetry, where $m$ is greater than 2.
\end{abstract}

\section{Introduction}
We consider the $\lambda$-hypersurface equation
\begin{equation}\label{lambda}
H=\frac{\langle x,N\rangle}{2}+\lambda,
\end{equation}
where $H$ is the mean curvature, $N$ is the normal vector on the surface and $\lambda$ is a constant.

This equation is first studied by McGonagle and Ross\cite{MR} and be named as $\lambda$-hypersurface in the work of Cheng and Wei\cite{CW1}. The equation arises in the Gaussian isoperimetric problem: In $\mathbb{R}^{n+1}$, the weighted Gaussian volume element and area element are given by $dV_\mu=\exp(-\frac{|x|^2}{4})dV$ and  $dA_\mu=\exp(-\frac{|x|^2}{4})dA$, where $dV$ and $dA$ are the volume element and area element induced by the Euclidean metric. The Gaussian isoperimetric problem asks: Among all regions with the same weighted volume $V_0$, which one has the least weighted boundary area? The answer to this problem is given in \cite{B}, \cite{ST}:  The half space minimizes the weighted boundary area.

The problem above can be considered locally as follows. Let $\Sigma$ be a surface minimizing the weighted boundary area among all surfaces enclosing the same weighted volume. The surface is a critical point for all variations that fix the enclosed weighted volume. The geometry condition on $\Sigma$ is given by equation \eqref{lambda}. This equation is defined on $\Sigma$ locally and it can be studied even if $\Sigma$ does not enclose a region. The solutions can be thought of as the critical points to the weighted area functional. The reader can refer to \cite{MR} for more detail.

The $\lambda$-hypersurfaces also arise in the study of the weighted volume-preserving flow by Cheng and Wei\cite{CW1}. Note that the ``weighted volume" in their work is defined on the surface and is different from the Gaussian weighted volume above. They prove that $\lambda$-hypersurfaces are critical points of the weighted area functional for all weighted volume-preserving variations. The $\lambda$-hypersurfaces can also be characterized as having constant weighted mean curvature.

\begin{exa}\label{examples}
The following are $\lambda$-hypersurfaces in $\mathbb{R}^{n}$:
\begin{enumerate}
\item The hyperplane $\mathbb{R}^{n-1}$ which is $|\lambda|$ away from the origin.
\item The cylinder $\mathbb{S}^k(r)\times\mathbb{R}^{n-1-k}$ with $\mathbb{S}^k(r)$ centered at the origin and

$r=\sqrt{\lambda^2+2k}-\lambda$.
\item The sphere $\mathbb{S}^{n-1}(r)$ centered at the origin with $r=\sqrt{\lambda^2+2(n-1)}-\lambda$.
\end{enumerate}
\end{exa}

The examples above admit properties such as polynomial volume growth and constant mean curvature. It is important to investigate under which assumption we can deduce a $\lambda$-hypersurface is one of the above. Some rigidity results can be found in \cite{COW}, \cite{CW2}, \cite{G} and \cite{MR}.

Note that in the special case $\lambda=0$, equation \eqref{lambda} becomes
\begin{equation}\label{shrinker}
H=\frac{\langle x,N\rangle}{2},
\end{equation}
which is the self-shrinker equation in mean curvature flow. This comes from the fact that the self-shrinkers are also the critical points of the weighted area functional in Gaussian space. Therefore, we call equation \eqref{lambda} the $\lambda$-self shrinker equation in this paper. In the study of mean curvature flow, solutions of equation \eqref{shrinker} are the solitons which move by scaling with respect to the origin. They are models for singularities in the study of mean curvature flow.

For 1-dimensional self-shrinkers in $\mathbb{R}^2$, Abresch and Langer\cite{AL} completely characterized the closed solutions. They prove that circles are the only closed, embedded solutions. For higher dimensional case, the most well-known solution is the ``Angenent's doughnuts" in $\mathbb{R}^3$, see \cite{A}. M{\o}ller\cite{M1} constructs more closed embedded solutions. For noncompact examples, the reader can refer to \cite{KM}. Some of the most improtant literature concerning the classification of the self-shrinkers is the following: Huisken\cite{H1} classifies the self-shrinkers under the condition of bounded second fundamental form, nonnegative mean curvature and polynomial volume growth. Later, Colding and Minicozzi\cite{CM} remove the requirement of the second fundamental form bound.

This paper focuses on the behavior of the equation \eqref{lambda} in $\mathbb{R}^2$. To simplify the equation, we scale the curve by a factor of $\sqrt{2}$ to make the constant $\frac{1}{2}$ become 1 and use the 1-dimensional curvature $k$ in place of mean curvature $H$. The equation becomes
\begin{equation}\label{shrinkereq}
k=-\langle x,N\rangle+\lambda.
\end{equation}
We give a sufficient constraint on $\lambda$ for embedded solutions to exist. Recall that if a set in $\mathbb{R}^2$ is invariant under the rotation of the angle $\frac{2\pi}{m}$ with respect to the origin, we say the set admits $m$-symmetry.
\begin{thm}\label{neglambdanear0}
For $\frac{-2}{\sqrt{3}}<\lambda<0$, there exists an embedded solution with 2-symmetry.
\end{thm}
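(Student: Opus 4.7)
The plan is to reduce \eqref{shrinkereq} to a two-dimensional autonomous system and apply phase-plane methods. Parameterize a curve by arclength $s$, set $r(s)=|x(s)|$, and let $\psi(s)$ be the oriented angle from the outward radial direction to the unit tangent $T$, so that $\langle x,N\rangle=-r\sin\psi$. Equation \eqref{shrinkereq} is then equivalent to the planar system
\[
\dot r=\cos\psi,\qquad \dot\psi=\sin\psi\Bigl(r-\tfrac{1}{r}\Bigr)+\lambda,
\]
together with the auxiliary relation $\dot\theta=r^{-1}\sin\psi$ for the polar angle of $x$. A short computation shows that $E(r,\psi):=e^{-r^2/2}(r\sin\psi+\lambda)$ is a first integral, so trajectories lie on level sets of $E$. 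The circle from Example~\ref{examples} is the equilibrium $P_\lambda=(r_1,\pi/2)$ with $r_1=(-\lambda+\sqrt{\lambda^2+4})/2$, at which $E$ has a local maximum $E_1>0$. Linearizing at $P_\lambda$ shows it is a nondegenerate center with frequency $\omega=\sqrt{1+1/r_1^2}$, so for each $c\in(0,E_1)$ the level set $\gamma_c=\{E=c\}$ is a closed periodic orbit encircling $P_\lambda$.

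A $2$-symmetric embedded closed solution winding once around the origin corresponds exactly to a closed orbit whose polar angle increments by $\pi$ in one period, so the goal is to find $c^\star\in(0,E_1)$ with
\[
\Delta\theta(c^\star):=\oint_{\gamma_{c^\star}}\frac{\sin\psi}{r}\,ds=\pi.
\]
From the linearization $\lim_{c\to E_1^-}\Delta\theta(c)=2\pi/\sqrt{r_1^2+1}$, and this limit equals $\pi$ exactly when $r_1=\sqrt 3$, i.e., at $\lambda=-2/\sqrt 3$. Hence for every $\lambda\in(-2/\sqrt 3,0)$ orbits close to $P_\lambda$ have $\Delta\theta(c)>\pi$. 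At the opposite endpoint, the set $\{E=0\}$ is the separatrix $\{r\sin\psi=-\lambda\}$, a non-closed orbit through $(|\lambda|,\pi/2)$ escaping to $r=\infty$ on both sides; evaluating its polar-angle increment via the substitution $r=|\lambda|\sec\phi$ gives exactly $\pi$, so $\Delta\theta(c)\to\pi$ as $c\to 0^+$ as well.

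The main obstacle is to show that the limit at the separatrix is approached strictly from below, so that $\Delta\theta(c)<\pi$ for some $c\in(0,E_1)$ and the intermediate value theorem produces the desired $c^\star$. Parameterizing $\gamma_c$ on its lower branch by $\sin\psi=\xi(r)/r$ with $\xi(r)=ce^{r^2/2}+|\lambda|$ gives
\[
\Delta\theta(c)=2\int_{r_-(c)}^{r_+(c)}\frac{\xi(r)}{r\sqrt{r^2-\xi(r)^2}}\,dr,
\]
where $r_\pm(c)$ are the two positive roots of $\xi(r)=r$ with $r_-\to|\lambda|$ and $r_+\to\infty$ as $c\to 0^+$. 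The orbit integrand dominates the separatrix integrand pointwise on $(r_-,r_+)$, but the range of integration is strictly contained in $(|\lambda|,\infty)$; the delicate estimate is to show that the contribution lost from the separatrix's far tail, which decays only like $|\lambda|/r_+\sim 1/\sqrt{\ln(1/c)}$, exceeds the pointwise gain. This can be organized by splitting the integral into a neighborhood of $r_+$ (where $ce^{r^2/2}$ is comparable to $r$) and its complement (where the excess is subordinate and admits a first-order expansion in $ce^{r^2/2}$). Granting this estimate, continuity of $\Delta\theta$ on $(0,E_1)$ yields $c^\star$ with $\Delta\theta(c^\star)=\pi$, and the corresponding curve in $\mathbb{R}^2$ is embedded because $\sin\psi=\xi(r)/r>0$ along the orbit forces $\dot\theta>0$ throughout, so the curve is a simple closed loop winding once around the origin and invariant under $x\mapsto -x$.
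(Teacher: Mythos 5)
Your phase-plane setup is sound and is equivalent, after the substitution $k=r\sin\psi+\lambda$, $\tau=r\cos\psi$, to the paper's $(\tau,k)$ system and potential $V(u)=u^2-2\lambda u-2\log u$; your first integral $E$ matches the paper's $F$, your center limit $2\pi/\sqrt{r_1^2+1}$ agrees with the paper's $\pi\sqrt{2}\sqrt{\lambda/\sqrt{\lambda^2+4}+1}$ (both cross $\pi$ at $\lambda=-2/\sqrt{3}$), and your embeddedness argument via $\dot\theta>0$ is fine. The problem is that the step you yourself label ``the main obstacle'' and then dispatch with ``Granting this estimate'' is the entire analytic content of the theorem, and it is not proven. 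Both endpoint limits of $\Delta\theta$ on $(0,E_1)$ are $\geq\pi$ (strictly greater at the center, exactly $\pi$ at the separatrix), so the intermediate value theorem yields nothing unless you establish $\Delta\theta(c)<\pi$ for some $c$. That the sign of the first-order correction at the separatrix is genuinely delicate is shown by the case $\lambda=0$: there Abresch--Langer prove $\Delta\theta$ decreases to $\pi$ without ever attaining it, which is precisely why the circle is the only closed embedded self-shrinker. The theorem therefore lives or dies on showing that for $\lambda<0$ this correction becomes negative, and your proposal only describes, qualitatively, a splitting that ``can be organized'' to make the lost tail $\sim 2|\lambda|/r_+$ beat the pointwise gain.

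Moreover, the needed estimate is not a routine verification in your framework. The excess of the orbit integrand over the separatrix integrand is concentrated near the turning point $r_+$, where $\sqrt{r^2-\xi(r)^2}\to 0$; a local expansion ($r-\xi\approx(r_+^2-1)(r_+-r)$) shows that this region contributes a positive amount of order $1/r_+$ --- the same order as the tail $2|\lambda|/r_+$ you are trying to beat --- so the conclusion hinges on an exact computation of competing constants, not on an order-of-magnitude argument. The paper carries this out as a standalone proposition: on the outer range it compares $V$ with a quadratic $\bar V$ interpolating between $u=1$ and $u=u_\eta^+$ (using concavity of $\bar V-V$ and the bound $u_\eta^+-\bar u_\eta^+=O(\eta^{-1/2}\log\eta)$) to get $\pi+2(\lambda-1)\eta^{-1/2}+o(\eta^{-1/2})$, and on the inner range it uses a cutoff $u_{\eta,L}^-$ and convexity of $V$ to bound the contribution by $2\sqrt{L/(L-1)}\,\eta^{-1/2}+o(\eta^{-1/2})$; choosing $L$ large makes the total correction $2\bigl(\lambda-1+\sqrt{L/(L-1)}\bigr)\eta^{-1/2}<0$ exactly when $\lambda<0$. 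Until you supply an estimate of this precision, the proof is incomplete.
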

\begin{thm}\label{neglambdanearinfty}
There exist $\delta>0$ such that for $\lambda<\frac{-7}{2\sqrt{2}}+\delta$, there exists an embedded solution with $m$-symmetry, $m>2$.
\end{thm}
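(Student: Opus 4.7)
The plan is to obtain the solutions by bifurcation from the circular branch of Example \ref{examples}(3) at the value $\lambda_3 := -7/(2\sqrt{2})$. In the scaled equation \eqref{shrinkereq} the centered circle $\mathbb{S}^1(r_0)$ is a solution exactly when $1/r_0 = r_0 + \lambda$, yielding a smooth trivial branch $r_0(\lambda) = (-\lambda + \sqrt{\lambda^2+4})/2$. I will parametrize candidate $m$-symmetric solutions as radial graphs $r(\theta) = r_0(\lambda) + u(\theta)$ with $u$ in a Banach space $X_m$ of even, $(2\pi/m)$-periodic $C^{2,\alpha}$ functions (so that the $m$-fold rotational symmetry and a reflection symmetry are both built in). Writing \eqref{shrinkereq} in these coordinates produces a smooth operator $F\colon X_m \times \mathbb{R} \to Y_m$, where $Y_m$ is the analogous $C^{0,\alpha}$ space, with $F(0,\lambda) \equiv 0$ on the trivial branch.

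The critical step is the linearization at the trivial branch. Using the curvature formula $k = (r^2 + 2r'^2 - rr'')/(r^2 + r'^2)^{3/2}$ and the identity $\langle x,N\rangle = -r^2/\sqrt{r^2+r'^2}$ for a radial graph, a direct first-order expansion at $u = 0$ gives
$$D_u F(0,\lambda)[\eta] \;=\; -\frac{1}{r_0(\lambda)^2}\Bigl(\eta'' + (1+r_0(\lambda)^2)\,\eta\Bigr).$$
Within $X_m$ this operator has nontrivial kernel precisely when $1 + r_0(\lambda)^2 = m^2$, that is, when $\lambda = \lambda_m := (2-m^2)/\sqrt{m^2-1}$; the evenness restriction then cuts the kernel down to the single mode $\cos(m\theta)$. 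For $m = 3$ this produces exactly $r_0(\lambda_3) = 2\sqrt{2}$ and $\lambda_3 = -7/(2\sqrt{2})$, matching the threshold in the theorem.

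I will conclude by applying the Crandall-Rabinowitz bifurcation theorem at $(u,\lambda) = (0,\lambda_3)$. Fredholm-of-index-zero for $D_u F(0,\lambda_3)$ is standard for a second-order elliptic operator on the circle, and its range is the $L^2$-orthogonal complement of $\cos(3\theta)$ in $Y_3$. For transversality, since $\partial_\theta^2 + (1+r_0^2)$ annihilates $\cos(m\theta)$ at $\lambda = \lambda_m$, the derivative $\partial_\lambda D_u F(0,\lambda_m)[\cos(m\theta)]$ collapses to $-\frac{2 r_0'(\lambda_m)}{r_0(\lambda_m)}\cos(m\theta)$; a direct computation using $r_0'(\lambda) = (-1 + \lambda/\sqrt{\lambda^2+4})/2$ yields $r_0'(\lambda_3) = -8/9 \neq 0$, so this is a nonzero multiple of $\cos(3\theta)$ and lies outside the range. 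Crandall-Rabinowitz then produces a smooth local branch $(\lambda(s),u(s))$ of nontrivial solutions with $u(s) = s\cos(3\theta) + o(s)$ and $\lambda(0) = \lambda_3$. For small $|s|$, $u(s)$ is $C^2$-small, so the radial graph $r = r_0(\lambda(s)) + u(s)$ is a small perturbation of the embedded circle $\mathbb{S}^1(2\sqrt{2})$, hence embedded; it is also $3$-symmetric by construction, giving $m = 3 > 2$ as required and covering a neighborhood of $\lambda_3$ (in particular a range $\lambda < \lambda_3 + \delta$ for $\delta$ small).

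The main obstacle I expect is the symmetry-adapted setup: in the naive space of $2\pi$-periodic functions the kernel at $\lambda_m$ is the two-dimensional $\mathrm{span}\{\cos(m\theta),\sin(m\theta)\}$, so the reflection symmetry must be imposed (and shown to be compatible with the full nonlinear operator $F$) to make the kernel one-dimensional and Crandall-Rabinowitz applicable. A secondary point, once the bifurcation is established, is determining the \emph{side} of $\lambda_3$ on which the branch lies---i.e.\ the signs of $\lambda'(0)$ and $\lambda''(0)$ in the bifurcation expansion---but since the theorem only demands $\lambda < \lambda_3 + \delta$, either a one-sided or a two-sided bifurcation suffices.
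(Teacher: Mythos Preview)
Your bifurcation setup is internally consistent and the identification of the critical value $\lambda_3=-7/(2\sqrt{2})$ via $1+r_0^2=9$ is correct, but there is a genuine gap in scope. Crandall--Rabinowitz yields a local curve $s\mapsto(\lambda(s),u(s))$ with $\lambda(0)=\lambda_3$; the set of parameter values this branch realizes is at best a short interval about $\lambda_3$, and in the pitchfork situation forced by your reflection symmetry it will typically lie on only one side of $\lambda_3$ until you compute the sign of $\lambda''(0)$. The theorem, however, asserts an embedded $m$-symmetric solution for \emph{every} $\lambda<\lambda_3+\delta$, including $\lambda$ arbitrarily large and negative. Your last sentence conflates ``a neighborhood of $\lambda_3$'' with the half-line $(-\infty,\lambda_3+\delta)$; the local bifurcation argument produces only the former, so as written the proposal does not establish the statement.

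The paper proceeds very differently: for each fixed $\lambda$ it analyzes the change-of-angle integral
\[
\triangle\theta(\eta)=\int_{u_\eta^-}^{u_\eta^+}\frac{2\,du}{\sqrt{\eta-V(u)}}
\]
along energy levels of the associated Hamiltonian system, shows $\lim_{\eta\to\min V}\triangle\theta<2\pi/3$ for $\lambda<\lambda_3$ and $\lim_{\eta\to\infty}\triangle\theta=\pi$, and then invokes the intermediate value theorem to hit $\triangle\theta=2\pi/3$; the resulting convex closed curve is the $3$-symmetric solution. The extension to $\lambda<\lambda_3+\delta$ comes from a monotonicity result for $\triangle\theta$ near the minimum energy. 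To rescue your approach for the full half-line you would need either a global continuation theorem (Rabinowitz global bifurcation together with a priori control of where the branch can terminate) or a separate argument showing that the branches emanating from the countably many $\lambda_m$, $m\ge 3$, together cover $(-\infty,\lambda_3+\delta)$; neither is addressed in the proposal.
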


Unlike the result of Abresch and Langer\cite{AL} that for the $\lambda=0$ case, the circle is the only closed embedded solution, we surprisingly find other embedded solutions. This affects the understanding of the rigidity problem about the classification of $\lambda$-hypersurfaces. If we cross the curve with $\mathbb{R}^{n-1}$, we obtain a $\lambda$-hypersurface in $\mathbb{R}^{n+1}$ which is topologically $\mathbb{S}^1\times\mathbb{R}^{n-1}$ with non-vanishing mean curvature and polynomial area growth. However, this is not the standard round cylinder as in example \ref{examples}. It is also interesting to compare the result with the isoperimetric problem in Euclidean space, where the critical surface to the area functional should have constant mean curvature. Thus the only 1-dimensional solutions of the isoperimetric problem in $\mathbb{R}^2$ are circles.

For positive $\lambda$, the behavior is similar to the self-shrinking curves. We have
\begin{thm}\label{poslambda}
When $\lambda>0$, there is no embedded solutions to the equation \eqref{shrinkereq} with $k>0$.
\end{thm}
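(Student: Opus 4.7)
The plan is to argue by contradiction. Assume $\gamma$ is a closed embedded solution of \eqref{shrinkereq} with $k>0$ other than the standard circular solution. The condition $k>0$ forces $\gamma$ to be strictly convex, and with the inward-normal convention (forced on us by $k=-\langle x,N\rangle+\lambda$ and the circle example), the picture is that of a strictly convex closed curve.

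Differentiating the equation along arc length and using $N'=-kT$ gives $k'=k\langle x,T\rangle=\tfrac{k}{2}(|x|^{2})'$, which integrates to the first integral
\[
k(s)\,=\,C\exp\!\bigl(|x(s)|^{2}/2\bigr),\qquad C>0,
\]
along $\gamma$; in particular the critical points of $k$ and of $r:=|x|$ agree. One further differentiation yields $(\ln k)''=1+\lambda k-k^{2}$; the second-derivative test at $k_{\max}$ and $k_{\min}$ gives $k_{\min}\le k_{+}\le k_{\max}$ with $k_{+}=(\lambda+\sqrt{\lambda^{2}+4})/2$. Hence $\max_{\gamma}\langle x,N\rangle=\lambda-k_{\min}\le\lambda-k_{+}<0$, which places the origin strictly inside the convex region bounded by $\gamma$. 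At any critical point of $r$ the identity $\langle x,T\rangle=0$ forces $x=\pm rN$, and since the origin lies inside and $N$ is inward, only $x=-rN$ is compatible; substitution then gives $k=r+\lambda$ at every such point. Thus $r_{\min}$ and $r_{\max}$ both solve $Ce^{r^{2}/2}=r+\lambda$, and strict convexity of $g(r)=Ce^{r^{2}/2}-r-\lambda$ implies $r_{\min}<r_{-}<r_{\max}$, where $r_{-}=(-\lambda+\sqrt{\lambda^{2}+4})/2$ is the radius of the round solution.

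Next I pass to the $(r,\alpha)$ phase plane, with $\alpha$ the signed angle from $T$ to $x$. A direct Frenet computation yields the autonomous system
\[
r'=\cos\alpha,\qquad \alpha'=\frac{\sin\alpha\,(r^{2}-1)}{r}-\lambda,
\]
and the position angle satisfies $\theta'=-\sin\alpha/r$. Since $\sin\alpha<0$ throughout (the origin is enclosed), $\theta$ is strictly monotone, and the closed embedded condition is equivalent to the resonance identity $\Delta\theta=2\pi/m$ for some integer $m\ge1$, where $\Delta\theta$ is the angular increment per period of the planar $(r,\alpha)$-orbit and $m$ is the order of rotational symmetry. Linearising at the center $(r_{-},-\pi/2)$ gives the infinitesimal value
\[
\Delta\theta_{\mathrm{lin}}=\frac{2\pi}{\sqrt{1+r_{-}^{2}}}\in\bigl(\pi\sqrt{2},\,2\pi\bigr),
\]
which holds because $\lambda>0$ forces $r_{-}\in(0,1)$. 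The open interval $(\pi\sqrt{2},2\pi)$ contains no number of the form $2\pi/m$ with $m\in\mathbb{Z}_{\ge1}$: for $m\ge2$, $2\pi/m\le\pi<\pi\sqrt{2}$, and $m=1$ would demand $\Delta\theta=2\pi$, attained only in the limit.

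The main obstacle is to upgrade the linearized estimate to the full family of non-circular periodic orbits, i.e.\ to show that the period integral
\[
\Delta\theta(C)\;=\;2\int_{r_{\min}(C)}^{r_{\max}(C)}\frac{Ce^{r^{2}/2}-\lambda}{r\sqrt{r^{2}-(Ce^{r^{2}/2}-\lambda)^{2}}}\,dr
\]
remains in $(\pi,2\pi)$ as $C$ ranges over the admissible energies producing two turning points with $r_{\min}<r_{-}<r_{\max}$. The intended route is a monotonicity argument for $\Delta\theta(C)$, combined with an analysis of the limiting regimes $C\to C_{\mathrm{circle}}^{-}$ (the linearized limit above) and $C\to 0^{+}$ (near the boundary of the admissible region in the phase plane). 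Once this global interval bound is established, the resonance $\Delta\theta=2\pi/m$ admits no positive integer solution, contradicting the existence of a non-circular closed embedded $\lambda$-self-shrinker with $k>0$ when $\lambda>0$.
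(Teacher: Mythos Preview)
Your setup and the linearized value $\Delta\theta_{\mathrm{lin}}=2\pi/\sqrt{1+r_-^{2}}\in(\pi\sqrt{2},2\pi)$ are correct, and you have correctly identified that the whole difficulty lies in extending this to a global bound on $\Delta\theta$ over all energy levels. There are two problems with the proposal as written.

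First, the inequality ``$\lambda-k_{\min}\le\lambda-k_{+}$'' is backwards: from $k_{\min}\le k_{+}$ one gets $\lambda-k_{\min}\ge\lambda-k_{+}$, so your chain does \emph{not} yield $\max_{\gamma}\langle x,N\rangle<0$. In fact for high energy one has $k_{\min}\to 0^{+}$, so $\langle x,N\rangle=\lambda-k$ becomes positive somewhere and the origin lies \emph{outside} the convex curve; this is exactly what the numerical pictures in Section~4 show for $\lambda>0$. Consequently ``$\sin\alpha<0$ throughout'' and the monotonicity of the position angle are unjustified in that regime. This is reparable---since $\alpha$ is periodic, the position-angle increment per period equals the tangent-angle increment $\oint k\,ds$, and the latter (which is what the paper uses) needs no hypothesis on the location of the origin---but the argument as written must be amended.

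Second, and this is the essential point, the route you propose for the ``main obstacle''---monotonicity of $\Delta\theta$ in the energy parameter $C$---is precisely what the paper does \emph{not} prove and in fact records as an open problem (Conjecture~4.1). So your plan, as stated, reduces Theorem~\ref{poslambda} to an unproven conjecture. The paper circumvents the energy-monotonicity question entirely: it proves instead that, at each fixed shifted energy $\bar\eta$, the period integral $\Delta\theta(\bar\eta,\lambda)$ is \emph{increasing in $\lambda$}, by comparing the translated potentials $\hat V_{\lambda}(u)=V_{\lambda}(u+k^{+})-\min V_{\lambda}$ and tracking how the turning points $u^{\pm}_{\bar\eta,\lambda}$ and the slopes $\hat V_{\lambda}'$ move with $\lambda$. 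This reduces everything to the classical case $\lambda=0$, where Abresch--Langer already established $\Delta\theta(\bar\eta,0)>\pi$ for every $\bar\eta$, giving $\Delta\theta(\bar\eta,\lambda)>\Delta\theta(\bar\eta,0)>\pi$ for all $\lambda>0$ and hence ruling out $\Delta\theta=2\pi/m$ for every integer $m\ge 2$. That comparison-in-$\lambda$ argument is the key idea your outline is missing.
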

\begin{rmk}It is worth mentioning that Guang\cite{G} establishes the same result as in theorem \ref{poslambda} with different proof. He considers the part of the curve where the curvature decrease from the maximum to the minimum. 
\end{rmk}


This paper will be structured as follows: In section 2, starting from the defining equation, we derive an ODE system for the 1-dimensional $\lambda$-self shrinkers. The approach used here is similar to that in \cite{H2}. In section 3, we analyze the behavior of the solution for the extreme cases: The energy is near the minimum and and energy is near infinity. We prove the theorem \ref{neglambdanear0}, \ref{neglambdanearinfty} and \ref{poslambda}. In section 4, we use Matlab to get some curves which approximately solve the equation. Some pictures of the curves are provided for a better understanding of the behavior for each of the different cases in the main theorems.

\section{Setting up the ODE system}
For a curve $x(s)\in\mathbb{R}^2$ parametrized by arc length $s$, we have
\begin{equation}
\begin{split}
\frac{d}{ds}x&=T,\\
\frac{d}{ds}T&=kN,
\end{split}
\end{equation}
where $T$ and $N$ are the tangent vector and the normal vector of the curve, respectively. Note that for any curve in $\mathbb{R}^2$, we have two possible choices of $N$: either rotate $T$ clockwise by $\frac{\pi}{2}$ or $-\frac{\pi}{2}$. If we let $N^-=-N$, $k^-=-k$, we have $kN=k^-N^-$. Therefore, we have
\begin{equation}
k^-=-k=\langle x,N\rangle-\lambda=-\langle x,N^-\rangle-\lambda.
\end{equation}
This tells us that selecting the opposite normal vector will change the sign of $k$ and result in a solution corresponding to $-\lambda$.

Using the method in \cite{H2}, put $\tau=\langle x,T\rangle$, $\nu=\langle x,N\rangle$. We can obtain the ODE system
\begin{equation}
\begin{split}
\frac{d}{ds}\tau&=1+k\nu=1-\nu^2+\lambda\nu,\\
\frac{d}{ds}\nu&=-k\tau=\nu\tau-\lambda\tau.
\end{split}
\end{equation}
The equilibrium is the point where $\frac{d}{ds}\tau=\frac{d}{ds}\nu=0$. They are given by $(0,\nu^\pm)$, where $\nu^\pm$ are the positive and the negative solutions of the equation $\nu^2-\lambda\nu-1=0$, respectively. At the equilibrium, the curvature is a nonzero constant. It corresponds to the circle centered at the origin. For $(0,\nu^+)$, it is a circle of radius $\nu^+$ with the normal pointed outward and $k<0$. For $(0,\nu^-)$, it is a circle of radius $-\nu^-=|\nu^-|$ with the normal pointed inward and $k>0$. Also, note that $(\tau,\nu)=(s,\lambda)$ is a solution which corresponds to a line with the minimum distance to the origin equal to $\lambda$. From now on, without loss of generality, we only consider solutions with $k\geq0$. They are the solutions with the trajectory contained in the half plane $\{\nu\leq\lambda\}$ of $\tau-\nu$ space. For the solutions with $k<0$, choose the opposite normal vector and study them as the solutions corresponding to $-\lambda$ with positive $k$.

\subsection{Periodicity of the solution}
For a solution to the system, the function 
\begin{equation}
F(\tau,\nu)=(\lambda-\nu)\exp(-\frac{\nu^2+\tau^2}{2})
\end{equation}
is positive in the $\{\nu\leq\lambda\}$ half plane. Differentiating it with respect to $s$, we have
\begin{equation}
\begin{split}
\frac{d}{ds}F&=\big(-\frac{d}{ds}\nu-(\lambda-\nu)(\nu\frac{d}{ds}\nu+\tau\frac{d}{ds}\tau)\big)\exp(-\frac{\nu^2+\tau^2}{2})\\
&=\Big(-(\nu-\lambda)\tau+(\nu-\lambda)\big(\nu(\nu-\lambda)\tau+\tau(1-\nu(\nu
-\lambda))\big)\Big)\exp(-\frac{\nu^2+\tau^2}{2})\\
&=0.
\end{split}
\end{equation}
The trajectory of each solution lies in a level set of $F$. Since each level set of $F$ is a simple closed curve except $\{F=0\}$, which corresponds to the line mentioned before, we have a uniform lower bound away from 0 of the speed of $(\tau(s),\nu(s))$ curve on each level set. Therefore, the solution $(\tau(s),\nu(s))$ should be periodic in $s$.
\begin{rmk}
Note that if $x(s)$ is periodic, then $\tau$, $\nu$ are periodic. But the converse is not true. Starting from a periodic solution of $(\tau(s),\nu(s))$, even though we can reconstruct $x(s)$ by this and the initial condition, the resulting $x(s)$ is periodic only when the change of angle in a period can be expressed as $\frac{n}{m}2\pi$, where $n$, $m$ are relatively prime positive integers. In this case, the period of $x(s)$ is $m$ times the period of $(\tau(s),\nu(s))$, and it will be a closed solution.
\end{rmk}

\subsection{Change of angle in a period}
Now, since $k$ is more directly related to the geometric behavior than $\nu$, we use $(\tau,k)$ as the variable instead of $(\tau,\nu)$. From now on, unless otherwise specified, we use $'$ for $\frac{d}{ds}$. Plugging $\nu=\lambda-k$ into the previous ODE system, it becomes
\begin{equation}
\begin{split}
\tau'&=1+\lambda k-k^2,\\
k'&=k\tau.
\end{split}
\end{equation}
and $\{\nu\leq\lambda\}$ becomes $\{k\geq0\}$ in $\tau-k$ plane. Note that after the change of variable, we still have the equilibrium at $(0,k^\pm)$, where $k^\pm=\nu^\pm$ because they satisfy exactly the same equation. However, $(0,k^\pm)$ correspond to $(0,\nu^\mp)$, respectively. The $\nu=\lambda$ line in $\tau-\nu$ space now becomes $k=0$ line in $\tau-k$ space.

In the $\{k>0\}$ half space, let $B=2\log k$. We have $B'=2\tau$ and 
\begin{equation}
B''=2\tau'=2+2k(\lambda-k)=2+2\lambda e^\frac{B}{2}-2e^B.
\end{equation}
Multiplying both sides by $B'$ and integrating with respect to $s$, we get
\begin{equation}
\frac{1}{2}(B')^2+2e^B-4\lambda e^\frac{B}{2}-2B=-4\log F-2\lambda^2.
\end{equation}
If we define $F_\lambda=F\cdot \exp\frac{\lambda^2}{2}$, $V(B)=e^B-2\lambda e^\frac{B}{2}-B$, the equation becomes
\begin{equation}
\frac{1}{2}(B')^2+2V(B)=-4\log F_\lambda.
\end{equation}
The minimum of $V(B)$ is attained when $\frac{d}{dB}V(B)=0$. $e^B-\lambda e^\frac{B}{2}-1=0$. $e^\frac{B}{2}=k^+$. This corresponds to the equilibrium at $(0,k^+)$ and $\min V(B)=-\lambda k^+-2\log k^++1$. Now, for any $\eta>\min V(B)$, we can find the solutions $B_\eta^-<B_\eta^+$ of $V(B)=\eta$. Considering the differential equation of $B$, we get
\begin{equation}
\begin{split}
\frac{1}{2}(B')^2&+2V(B)=2\eta,\\
B'&=\pm 2\sqrt{\eta-V(B)}.
\end{split}
\end{equation}
Therefore, the length of the curve in a period is given by
\begin{equation}
\oint ds=2\int_{B_\eta^-}^{B_\eta^+}(\frac{dB}{ds})^{-1}dB=\int_{B_\eta^-}^{B_\eta^+}\frac{1}{\sqrt{\eta-V(B)}}dB,
\end{equation}
and the change of the angle in a period is given by 
\begin{equation}
\triangle\theta=\oint kds=\int_{B_\eta^-}^{B_\eta^+}\frac{e^\frac{B}{2}}{\sqrt{\eta-V(B)}}dB.
\end{equation}
In order to simplify the calculation, let $u=e^\frac{B}{2}$, $u_\eta^\pm=e^\frac{B_\eta^\pm}{2}$, respectively. $V(u)=u^2-2\lambda u-2\log u$ after the change of variable. The change of angle, $\triangle\theta$, is given by
\begin{equation}
\triangle\theta=\int_{u_\eta^-}^{u_\eta^+}\frac{2du}{\sqrt{\eta-V(u)}}.
\end{equation}

\section{The behavior of the solutions}

Now, we will focus on the behavior of $\triangle\theta$ when the energy $\eta$ varies from $\min V(B)$ to $\infty$.

\subsection{The behavior of the solution when $\eta$ is near min$V(B)$}
The following focuses on the behavior of $\triangle\theta$. When the energy is near the minimum, the behavior is closed to a simple harmonic oscillator. 
\begin{lem}
For any potential function $V\in C^2$, at a local minimum $x_0$ with positive second derivative, let $u_\eta^-$ be the largest solution of $V(u)=\eta$ which is below $x_0$ and let $u_\eta^+$ be the smallest solution of $V(u)=\eta$ which is above $x_0$. We have
\begin{equation}
\lim_{\eta\to V(x_0)^+}\int_{u_\eta^-}^{u_\eta^+}\frac{du}{\sqrt{\eta-V(u)}}=\sqrt{\frac{2}{V''(x_0)}}\pi.
\end{equation}
\end{lem}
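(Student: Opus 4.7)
The plan is to straighten the potential near $x_0$ via a change of variable $y = y(u)$ satisfying $y^2 = V(u) - V(x_0)$, which reduces the integrand to one with a perfectly quadratic denominator. A further trigonometric substitution removes the square-root singularities at the turning points, after which the limit can be read off by continuity.

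First I would translate coordinates so that $x_0 = 0$ and $V(0) = 0$. The integral form of Taylor's theorem then yields $V(u) = u^2 g(u)$ with
\begin{equation*}
g(u) := \int_0^1 (1-\sigma)\, V''(u\sigma)\, d\sigma,
\end{equation*}
a continuous function satisfying $g(0) = \tfrac{1}{2} V''(0) > 0$. On a neighborhood $I$ of $0$ on which $g > 0$ and $V$ is strictly convex, define $y(u) := u\sqrt{g(u)}$, so that $y(u)^2 = V(u)$. Then $y$ is a strictly increasing homeomorphism of $I$ onto some neighborhood $J$ of $0$, and for $\eta$ sufficiently close to $0$ the turning points $u_\eta^\pm$ lie in $I$ and map to $\pm\sqrt{\eta}$ under $y$.

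Let $u = u(y)$ be the inverse. Away from $y = 0$, the implicit function theorem applied to $V(u) - y^2 = 0$ (valid since $V'(u) \neq 0$ on $I \setminus \{0\}$) gives $u \in C^1$ with $u'(y) = 2y/V'(u(y))$. At $y = 0$, differentiability of $u$ with $u'(0) = \sqrt{2/V''(0)}$ follows from $u(y)^2 g(u(y)) = y^2$ and continuity of $g$; combining the Taylor expansion $V'(u) = V''(0)u + o(u)$ with $u(y) \sim y\sqrt{2/V''(0)}$ yields $\lim_{y \to 0} 2y/V'(u(y)) = \sqrt{2/V''(0)}$, so $u \in C^1(J)$. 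The change of variables formula together with the substitution $y = \sqrt{\eta}\sin\theta$ then transforms the integral into
\begin{equation*}
\int_{u_\eta^-}^{u_\eta^+} \frac{du}{\sqrt{\eta - V(u)}} \;=\; \int_{-\sqrt{\eta}}^{\sqrt{\eta}} \frac{u'(y)\,dy}{\sqrt{\eta - y^2}} \;=\; \int_{-\pi/2}^{\pi/2} u'\!\left(\sqrt{\eta}\sin\theta\right)\, d\theta.
\end{equation*}

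Since $u'$ is continuous on $J$, it is uniformly continuous on a compact subneighborhood of $0$, so the integrand converges uniformly in $\theta$ to the constant $u'(0) = \sqrt{2/V''(0)}$ as $\eta \to 0^+$. The integral therefore tends to $\pi\sqrt{2/V''(0)}$, which is the claimed limit. The main technical step is the verification that $u \in C^1$ at $y = 0$ under the mere assumption $V \in C^2$; everything else is a routine application of change of variables.
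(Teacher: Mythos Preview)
Your proof is correct and takes a genuinely different route from the paper. The paper argues by comparison: for each $\epsilon>0$ it sandwiches $V$ on $[u_\eta^-,u_\eta^+]$ between two quadratics $V_\pm$ that agree with $V$ at both endpoints but have constant second derivatives $V''(x_0)\mp\epsilon$; since the integral for a pure quadratic equals $\pi\sqrt{2/V_\pm''}$ independently of $\eta$, this squeezes the value and one lets $\epsilon\to0$. Your argument instead straightens $V$ exactly via the Morse-type coordinate $y^2=V(u)-V(x_0)$ and a trigonometric substitution, producing the closed identity $\int_{-\pi/2}^{\pi/2}u'(\sqrt{\eta}\sin\theta)\,d\theta$ for every small $\eta$, from which the limit follows by continuity of $u'$. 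The paper's squeeze is shorter and sidesteps any regularity issues for the coordinate change; your approach costs the careful check that $u\in C^1$ at $y=0$ under only a $C^2$ hypothesis on $V$ (which you handle correctly via the integral Taylor remainder), but in return yields an exact formula for each $\eta$ rather than just bounds---a natural starting point for higher-order expansions of $\triangle\theta$ in $\eta$, such as the monotonicity statement invoked immediately afterward in the paper.
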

\begin{proof}
First, note that for the case in which the potential is quadratic, $V(u)=V(x_0)+\frac{V''(x_0)(x-x_0)^2}{2}$, a simple calculation shows that
\begin{equation}
\int_{u_\eta^-}^{u_\eta^+}\frac{du}{\sqrt{\eta-V(u)}}=\sqrt{\frac{2}{V''(x_0)}}\pi
\end{equation}
for any $\eta>V(x_0)$ and is independent of $\eta$.

For arbitrary potential function $V\in C^2$ and $\epsilon>0$, there is $\delta>0$ such that for all $V(x_0)<\eta<V(x_0)+\delta$, we have $|V''(u)-V''(x_0)|<\epsilon$ for $u\in[u_\eta^-,u_\eta^+]$. Let $V_\pm$ be the quadratic function which pass through $(u_\eta^-,\eta)$, $(u_\eta^+,\eta)$ with $V_\pm''=V''(x_0)\mp\epsilon$. We have $V_-<V<V_+$ in $(u_\eta^-, u_\eta^+)$. Therefore,
\begin{equation}
\begin{split}
\sqrt{\frac{2}{V''(x_0)+\epsilon}}\pi&=\int_{u_\eta^-}^{u_\eta^+}\frac{du}{\sqrt{\eta-V_-(u)}}\leq\int_{u_\eta^-}^{u_\eta^+}\frac{du}{\sqrt{\eta-V(u)}}\\
&\leq\int_{u_\eta^-}^{u_\eta^+}\frac{du}{\sqrt{\eta-V_+(u)}}=\sqrt{\frac{2}{V''(x_0)-\epsilon}}\pi.
\end{split}
\end{equation}
Letting $\epsilon$ goes to 0 yields the desired result.
\end{proof}

\begin{prop}
When $\eta\to \min V(B)^+$, $\triangle\theta$ approaches $\pi\sqrt{2}\sqrt{\frac{\lambda}{\sqrt{\lambda^2+4}}+1}$. Moreover, $\triangle\theta$ is decreasing in a neighborhood of $\min V(B)$.
\end{prop}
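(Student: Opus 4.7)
The plan is to apply the preceding lemma to get the limiting value of $\triangle\theta$, and then to carry out a refined asymptotic expansion to verify the monotonicity.

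For the limit, the potential in the $u$-variable is $V(u) = u^2 - 2\lambda u - 2\log u$, whose unique interior critical point is $u = k^+$, the positive root of $u^2 - \lambda u - 1 = 0$; a direct computation gives $V''(k^+) = 2 + 2/(k^+)^2$. Applying the preceding lemma to $\triangle\theta = \int_{u_\eta^-}^{u_\eta^+} 2\,du/\sqrt{\eta - V(u)}$ yields the limit $2\pi\sqrt{2/V''(k^+)} = 2\pi k^+/\sqrt{(k^+)^2+1}$, and a short algebraic manipulation using $(k^+)^2 = \lambda k^+ + 1$ together with $k^+ = (\lambda+\sqrt{\lambda^2+4})/2$ recasts this as $\pi\sqrt{2}\sqrt{1 + \lambda/\sqrt{\lambda^2+4}}$, matching the stated value.

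For monotonicity, set $\epsilon = \eta - V(k^+) \to 0^+$ and introduce the regularizing substitution $V(u) - V(k^+) = \epsilon\sin^2\phi$, $\phi \in [-\pi/2,\pi/2]$, which simultaneously parametrizes both branches $u \in [u_\eta^-, u_\eta^+]$ and removes the $(\eta-V)^{-1/2}$ endpoint singularity. Differentiating gives $V'(u)\,du = 2\epsilon\sin\phi\cos\phi\,d\phi$, so
\begin{equation*}
\triangle\theta \;=\; \int_{-\pi/2}^{\pi/2} \frac{4\sqrt{\epsilon}\,\sin\phi}{V'(u(\phi,\epsilon))}\, d\phi.
\end{equation*}
I would then write $u - k^+ = \sum_{j\ge 1} f_j(\phi)\, r^j$ with $r = \sqrt{2\epsilon/V''(k^+)}$ and solve the defining relation order by order, obtaining $f_1 = \sin\phi$ and successive $f_j$ built from $V'''(k^+)$, $V''''(k^+)$, etc. Inserting this series into $V'(u)$ and expanding the integrand in $r$, the odd-in-$\sin\phi$ contributions integrate to zero by the symmetry of $[-\pi/2,\pi/2]$, and the leading nontrivial correction is linear in $\epsilon$:
\begin{equation*}
\triangle\theta \;=\; 2\pi\sqrt{2/V''(k^+)} \;+\; C(\lambda)\,\epsilon \;+\; O(\epsilon^{3/2}),
\end{equation*}
where $C(\lambda)$ is an explicit rational expression in $V''(k^+)$, $V'''(k^+)$, and $V''''(k^+)$.

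The main obstacle will be to verify $C(\lambda) < 0$. Substituting $V''(k^+) = 2 + 2/(k^+)^2$, $V'''(k^+) = -4/(k^+)^3$, and $V''''(k^+) = 12/(k^+)^4$, the sign of $C(\lambda)$ reduces to a one-variable algebraic inequality in $k^+$, which via $(k^+)^2 = \lambda k^+ + 1$ can be rewritten as an inequality in $\lambda$ and checked directly. Once this is established, strict decrease of $\triangle\theta$ on a neighborhood of $\eta = \min V(B)$ follows from the continuity of $C$ and of the higher-order remainder.
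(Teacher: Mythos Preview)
Your treatment of the limiting value is identical to the paper's: both apply the preceding lemma with $V''(k^+)=2+2/(k^+)^2$ and simplify using $k^+=(\lambda+\sqrt{\lambda^2+4})/2$.

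For the monotonicity clause you take a genuinely different route. The paper does not carry out any expansion; it simply invokes Chicone's criterion for the local monotonicity of the period function of a planar Hamiltonian system, which reduces the question to the sign of $5\bigl(V^{(3)}(k^+)\bigr)^2-4\,V^{(2)}(k^+)\,V^{(4)}(k^+)$. Plugging in $V^{(2)}=2+2(k^+)^{-2}$, $V^{(3)}=-4(k^+)^{-3}$, $V^{(4)}=12(k^+)^{-4}$ gives $-16(k^+)^{-6}-96(k^+)^{-4}<0$, and the conclusion follows in one line. Your plan instead derives this by hand: the regularizing substitution $V(u)-V(k^+)=\epsilon\sin^2\phi$ and the power-series inversion $u-k^+=\sum f_j(\phi)r^j$ are exactly the classical machinery behind Chicone's result, and the coefficient $C(\lambda)$ you would extract is, up to a positive multiplicative constant depending only on $V''(k^+)$, precisely the combination $5(V''')^2-4V''V''''$ at $k^+$. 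So your approach is sound and more self-contained, at the cost of a page of careful bookkeeping that the paper avoids by citation; once you reach the explicit $C(\lambda)$, the sign check is the same one-variable computation the paper performs.
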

\begin{proof}
Let $\eta\to \min V(B)^+$. The derivatives of $V(u)$ with respect to $u$ at the minimum point are 
\begin{equation}
\begin{split}
V^{(2)}(k^+)&=2+2(k^+)^{-2},\\
V^{(3)}(k^+)&=-4(k^+)^{-3},\\
V^{(4)}(k^+)&=12(k^+)^{-4}.\\
\end{split}
\end{equation}
Therefore, from the lemma above and recall that $k^+=\frac{\lambda+\sqrt{\lambda^2+4}}{2}$, we have
\begin{equation}
\begin{split}
\lim_{\eta\to\min V(B)^+}\triangle\theta&=\lim_{\eta\to\min V(B)^+}\int_{u_\eta^-}^{u_\eta^+}\frac{2du}{\sqrt{\eta-V(u)}}\\
&=2\pi\cdot\sqrt{\frac{2(k^+)^2}{2(k^+)^2+2}}=\pi\sqrt{2}\sqrt{\frac{\lambda}{\sqrt{\lambda^2+4}}+1}.
\end{split}
\end{equation}

From the result of \cite{C}, since
\begin{equation}
\begin{split}
5(V^{(3)})^2-4V^{(2)}V^{(4)}&=80(k^+)^{-6}
-96((k^+)^{-6}+(k^+)^{-4})\\
&=-16(k^+)^{-6}-96(k^+)^{-4}<0,\\
\end{split}
\end{equation}
the function $\triangle\theta$ is monotone decreasing near $\min V(B)$ with respect to $\eta$.
\end{proof}

\begin{rmk}
For the case of self shrinkers, we have $\lambda=0$. The proposition above gives $\triangle\theta\to\sqrt{2}\pi$, as the result in \cite{AL}. This function is strictly increasing with respect to $\lambda$. When $\lambda$ approaches $\infty$, $\triangle\theta$ approaches $2\pi$. When $\lambda$ approaches $-\infty$, $\triangle\theta$ approaches $0$.
\end{rmk}

\subsection{The behavior of the solution when $\eta$ is near infinity}

Now, we turn our attention to the behavior of $\triangle\theta$ when the energy approaches infinity. An upperbound of $\triangle\theta$ is given by the following proposition.

\begin{prop}
For any $L>1$,
\begin{equation}
\triangle\theta\leq\pi+2(\lambda-1+\sqrt{\frac{L}{L-1}})\frac{1}{\sqrt{\eta}}+o(\frac{1}{\sqrt{\eta}})
\end{equation}
as $\eta$ goes to infinity.
\end{prop}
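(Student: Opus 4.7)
My plan is to upper bound
\[
\triangle\theta = \int_{u_\eta^-}^{u_\eta^+}\frac{2\,du}{\sqrt{\eta-V(u)}}, \qquad V(u)=u^2-2\lambda u-2\log u,
\]
by splitting the range of integration at an $L$-dependent point $u_0$ and comparing $V$ to a tractable quadratic on the large-$u$ piece. As $\eta\to\infty$, $u_\eta^-\to 0^+$ and $u_\eta^+=\sqrt\eta+\lambda+O(\log\eta/\sqrt\eta)$, so most of the integral comes from $[u_0,u_\eta^+]$ while $[u_\eta^-,u_0]$ contributes only a lower-order correction.

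On the main piece I would use $-2\log u\le -2\log u_0$ for $u\ge u_0$ to bound $V(u)\le W(u):=u^2-2\lambda u-2\log u_0$. Completing the square gives $\eta-W(u)=R^2-(u-\lambda)^2$ with $R=\sqrt{\eta+\lambda^2+2\log u_0}$, and the standard arcsine substitution yields
\[
\int_{u_0}^{\lambda+R}\frac{2\,du}{\sqrt{R^2-(u-\lambda)^2}} = \pi - 2\arcsin\!\left(\frac{u_0-\lambda}{R}\right).
\]
A residual tail on $[\lambda+R,u_\eta^+]$ remains because the true turning point lies beyond $\lambda+R$; I would control this via a secant-line upper bound on $V$ (exploiting its convexity) and choose $u_0\sim\sqrt{\eta/L}$ so that the discrepancy $u_\eta^+-(\lambda+R)$ is $O(1/\sqrt\eta)$, making the tail contribute $2\sqrt{L/(L-1)}/\sqrt\eta$ at leading order.

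For the small piece $[u_\eta^-,u_0]$, where $V$ is dominated by $-2\log u$ near $u_\eta^-$, I would use $V(u)\le -2\log u+C_0$ with $C_0$ bounded on the relevant range, then substitute $v=\sqrt{\eta-V(u)}$ to reduce to a Gaussian-type integral $\int_0^{\sqrt{\eta-V(u_0)}} e^{v^2/2}\,dv$. Its asymptotic $e^{X^2/2}/X$ together with the $e^{-\eta/2}$ prefactor gives a contribution of order $1/\sqrt\eta$ that combines with $-2\arcsin((u_0-\lambda)/R)$ from the main piece so that the $u_0$-dependence cancels at leading order, producing the shift $2(\lambda-1)/\sqrt\eta$ and leaving $\triangle\theta\le\pi+2(\lambda-1+\sqrt{L/(L-1)})/\sqrt\eta+o(1/\sqrt\eta)$.

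The main technical obstacle is the tail near $u_\eta^+$: for any $L$-independent $u_0$, the quadratic $W$ misses the true turning point by $\log\eta/(2\sqrt\eta)$, giving a tail contribution of order $\sqrt{\log\eta}/\sqrt\eta$ which is $\omega(1/\sqrt\eta)$ and would spoil the stated $o(1/\sqrt\eta)$ remainder. The $L$-dependent growing choice $u_0\sim\sqrt{\eta/L}$ shrinks the discrepancy to $O(1/\sqrt\eta)$, and the constant $\sqrt{L/(L-1)}$ arises from balancing the tail length against the secant-based convexity estimate; taking $L\to\infty$ formally recovers the tight $2\lambda/\sqrt\eta$ correction consistent with the purely quadratic limit.
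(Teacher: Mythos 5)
There is a genuine gap, and it sits in both halves of your decomposition. First, with the cut point $u_0\sim\sqrt{\eta/L}$ growing with $\eta$, the piece $\int_{u_\eta^-}^{u_0}\frac{2\,du}{\sqrt{\eta-V(u)}}$ is \emph{not} a lower-order correction: on $[1,u_0]$ one has $\eta-V(u)\approx\eta-u^2$, so this piece already contributes about $2\arcsin(1/\sqrt{L})$, a constant. Consistently, the bound you propose there, $V(u)\le -2\log u+C_0$ with $C_0$ bounded, is false on most of that range (at $u=u_0$ one has $V(u_0)\approx\eta/L$, while $-2\log u_0=-\log(\eta/L)$ is negative and large), so the Gaussian-integral estimate of order $1/\sqrt{\eta}$ built on it does not hold. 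You do gesture at the fix --- the constant $-2\arcsin((u_0-\lambda)/R)$ from the main piece must cancel the constant from the small piece --- but that cancellation has to be exact to precision $o(1/\sqrt{\eta})$ to leave the shift $2(\lambda-1)/\sqrt{\eta}$, and nothing in the sketch delivers that precision. Second, the tail $[\lambda+R,\,u_\eta^+]$: with your choice of $u_0$ the discrepancy is $u_\eta^+-(\lambda+R)\approx\frac{\log(u_\eta^+/u_0)}{2\sqrt{\eta}}\approx\frac{\log L}{4\sqrt{\eta}}$, and the secant/convexity bound $\eta-V(u)\gtrsim 2\sqrt{\eta}\,(u_\eta^+-u)$ turns this into a contribution of order $2\sqrt{\tfrac12\log L}\,/\sqrt{\eta}$, not $2\sqrt{L/(L-1)}/\sqrt{\eta}$. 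So even if the cancellation were carried out, you would prove an inequality with a different $L$-dependent constant than the one stated (note $\sqrt{\tfrac12\log L}\to\infty$ as $L\to\infty$ while $\sqrt{L/(L-1)}\to 1$), i.e.\ a different proposition.

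For comparison, the paper avoids both problems by splitting at the \emph{fixed} point $u=1$. On $[1,u_\eta^+]$ it compares $V$ not with $u^2-2\lambda u-2\log u_0$ but with the affinely reparametrized quadratic $\bar V(u)=w(u)^2-2\lambda w(u)$, $w$ linear with $w(1)=1$ and $w(u_\eta^+)=\bar u_\eta^+$ (the turning point of the pure quadratic); then $\bar V$ and $V$ agree at \emph{both} endpoints and $(\bar V-V)''<0$ forces $\bar V\ge V$ in between, so the arcsine integral applies on the whole interval with no leftover tail, and the mismatch $u_\eta^+-\bar u_\eta^+=O(\eta^{-1/2}\log\eta)$ enters only as a harmless multiplicative factor $1+O(\eta^{-1}\log\eta)$. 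This is where $\pi+2(\lambda-1)/\sqrt{\eta}$ comes from, the $-1$ being $-2\sin^{-1}\frac{1-\lambda}{\sqrt{\eta+\lambda^2}}$ from the fixed endpoint $u=1$. The factor $\sqrt{L/(L-1)}$ then arises entirely on the left piece $[u_\eta^-,1]$, by cutting at the exponentially small point $u_{\eta,L}^-=\exp(-\tfrac{\eta}{2L}+\tfrac12+|\lambda|)$ where $V<\eta/L$, so that $\eta-V\ge\frac{L-1}{L}\eta$ on $[u_{\eta,L}^-,1]$ and the remaining sliver $[u_\eta^-,u_{\eta,L}^-]$ is controlled by convexity and has exponentially small length. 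If you want to salvage your route, the essential missing ingredient is the two-endpoint (rescaled) comparison quadratic; a one-sided comparison anchored only at a lower cut point cannot reach the stated remainder.
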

\begin{proof}
In order to get an upper bound of $\triangle\theta$, separate the integration into two terms.
\begin{equation}
\triangle\theta=\int_{u_\eta^-}^{u_\eta^+}\frac{2du}{\sqrt{\eta-V(u)}}=\int_{u_\eta^-}^1\frac{2du}{\sqrt{\eta-V(u)}}+\int_1^{u_\eta^+}\frac{2du}{\sqrt{\eta-V(u)}}.
\end{equation}
When $1\leq u\leq u_\eta^+$, let $\bar{u}_\eta^+$ be the positive solution of $\eta=u^2-2\lambda u$. Note that $\bar{u}_\eta^+<u_\eta^+$. Let $\bar{V}(u)=(\frac{\bar{u}_\eta^+-1}{u_\eta^+-1}(u-1)+1)^2-2\lambda(\frac{\bar{u}_\eta^+-1}{u_\eta^+-1}(u-1)+1)$. At $u=u_\eta^+$, $V(u)=\bar{V}(u)=\eta$. At $u=1$, $V(u)=\bar{V}(u)=1-2\lambda$. The second derivative of $\bar{V}(u)-V(u)$ is
\begin{equation}
\Big(\bar{V}(u)-V(u)\Big)''=2(\frac{\bar{u}_\eta^+-1}{u_\eta^+-1})^2-(2+2\frac{1}{u^2})<0.
\end{equation}
We can conclude
\begin{equation}
\bar{V}(u)-V(u)\geq 0
\end{equation}
for any $1<u<u_\eta^+$. Therefore, we have
\begin{equation}
\begin{split}
\int_1^{u_\eta^+}\frac{2du}{\sqrt{\eta-V(u)}}&\leq\int_1^{u_\eta^+}\frac{2du}{\sqrt{\eta-\bar{V}(u)}}\\
&=\frac{u_\eta^+-1}{\bar{u}_\eta^+-1}\int_1^{\bar{u}_\eta^+}\frac{2dv}{\sqrt{\eta-v^2+2\lambda v}}\\
&=2\frac{u_\eta^+-1}{\bar{u}_\eta^+-1}(\frac{\pi}{2}-\sin^{-1}\frac{1-\lambda}{\sqrt{\eta+\lambda^2}}).\\
\end{split}
\end{equation}

We need an upper bound for $\frac{u_\eta^+-1}{\bar{u}_\eta^+-1}$. Starting from $\bar{u}_\eta^+=\lambda+\sqrt{\lambda^2+\eta}$, $V(\bar{u}_\eta^+)=\eta-2\log\bar{u}_\eta^+$, $V(u_\eta^+)=\eta$ and $V'(u)\geq 2\bar{u}_\eta^+-2\lambda-2\frac{1}{\bar{u}_\eta^+}$ for $\bar{u}_\eta^+<u<u_\eta^+$, we have
\begin{equation}
u_\eta^+-\bar{u}_\eta^+\leq\frac{2\log\bar{u}_\eta^+}{2\bar{u}_\eta^+-2\lambda-2\frac{1}{\bar{u}_\eta^+}}=\frac{\log\bar{u}_\eta^+}{\bar{u}_\eta^+-\lambda-\frac{1}{\bar{u}_\eta^+}}\leq\frac{C\log\eta}{\frac{\sqrt{\eta}}{2}}=O(\eta^{-\frac{1}{2}}\log\eta)
\end{equation}
for $\eta$ large enough. Hence,
\begin{equation}
\frac{u_\eta^+-1}{\bar{u}_\eta^+-1}=1+\frac{u_\eta^+-\bar{u}_\eta^+}{\bar{u}_\eta^+-1}=1+O(\eta^{-1}\log\eta).
\end{equation}
Therfore,
\begin{equation}
\int_1^{u_\eta^+}\frac{2du}{\sqrt{\eta-V(u)}}\leq2\frac{u_\eta^+-1}{\bar{u}_\eta^+-1}\big(\frac{\pi}{2}-\sin^{-1}\frac{1-\lambda}{\sqrt{\eta+\lambda^2}}\big)=\pi+2\frac{\lambda-1}{\sqrt{\eta}}+o(\frac{1}{\sqrt{\eta}}).\\
\end{equation}

Now, we are going to estimate the other term. For all $L>1$, let $u_{\eta,L}^-=\exp(-\frac{\eta}{2L}+\frac{1}{2}+|\lambda|)$.  Note that when $\eta$ is large enough, $u_{\eta,L}^-<1$ and $V(u_{\eta,L}^-)<\frac{\eta}{L}$.
\begin{equation}
\int_{u_\eta^-}^1\frac{2du}{\sqrt{\eta-V(u)}}=\int_{u_\eta^-}^{u_{\eta,L}^-}\frac{2du}{\sqrt{\eta-V(u)}}+\int_{u_{\eta,L}^-}^1\frac{2du}{\sqrt{\eta-V(u)}}.
\end{equation}
For the first term, since $V(u_\eta^-)=\eta$, $V(u_{\eta,L}^-)<\frac{\eta}{L}$, $V''>0$, we have 
\begin{equation}
V(u)<\frac{\eta}{L}+(\frac{L-1}{L})\eta\frac{u-u_{\eta,L}^-}{u_\eta^--u_{\eta,L}^-}
\end{equation}
for $u_\eta^-<u<u_{\eta,L}^-$ and
\begin{equation}
\begin{split}
\int_{u_\eta^-}^{u_{\eta,L}^-}\frac{2du}{\sqrt{\eta-V(u)}}&\leq\int_{u_\eta^-}^{u_{\eta,L}^-}\frac{2du}{\sqrt{\frac{L-1}{L}\eta(\frac{u_\eta^--u}{u_\eta^--u_{\eta,L}^-})}}=(u_{\eta,L}^--u_\eta^-)\int_0^1
\frac{2dv}{\sqrt{\frac{L-1}{L}\eta v}}\\
&\leq u_{\eta,L}^-\int_0^1\frac{2dv}{\sqrt{\frac{L-1}{L}\eta v}}= u_{\eta,L}^-\sqrt{\frac{L}{L-1}}\frac{4}{\sqrt{\eta}}.
\end{split}
\end{equation}
The second term can be bounded by the following,
\begin{equation}
\int_{u_{\eta,L}^-}^1\frac{2du}{\sqrt{\eta-V(u)}}\leq \int_{u_{\eta,L}^-}^1\frac{2du}{\sqrt{\frac{L-1}{L}\eta}}\leq\sqrt{\frac{L}{L-1}}\frac{2}{\sqrt{\eta}}.
\end{equation}
Therefore, we get
\begin{equation}
\int_{u_\eta^-}^1\frac{2du}{\sqrt{\eta-V(u)}}\leq\sqrt{\frac{L}{L-1}}\frac{2}{\sqrt{\eta}}+o(\frac{1}{\sqrt{\eta}}).
\end{equation}
Combine the estimation of both terms, we have 
\begin{equation}
\triangle\theta\leq\pi+2\Big(\lambda-1+\sqrt{\frac{L}{L-1}}\Big)\frac{1}{\sqrt{\eta}}+o(\frac{1}{\sqrt{\eta}})
\end{equation}
\end{proof}

After establishing an upper bound of $\triangle\theta$, a lower bound is given by the following:

\begin{prop}
We have
\begin{equation}
\triangle\theta\geq\pi+\sin^{-1}\frac{\lambda-u_\eta^-}{\sqrt{\eta+2\log u_\eta^++\lambda^2}}.
\end{equation}
\end{prop}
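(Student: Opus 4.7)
The plan is to mirror the argument for the preceding upper bound, comparing $V$ with a quadratic from below so that the integrand grows rather than shrinks. Since $\log u\leq \log u_\eta^+$ on $[u_\eta^-,u_\eta^+]$, we have
\begin{equation*}
V(u)=u^2-2\lambda u-2\log u\;\geq\;u^2-2\lambda u-2\log u_\eta^+\;=:\;\tilde V(u),
\end{equation*}
with equality at the right endpoint. Completing the square gives $\eta-\tilde V(u)=A^2-(u-\lambda)^2$ where $A:=\sqrt{\eta+2\log u_\eta^++\lambda^2}$, and hence
\begin{equation*}
\triangle\theta\;=\;\int_{u_\eta^-}^{u_\eta^+}\frac{2\,du}{\sqrt{\eta-V(u)}}\;\geq\;\int_{u_\eta^-}^{u_\eta^+}\frac{2\,du}{\sqrt{A^2-(u-\lambda)^2}}.
\end{equation*}

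Before integrating, I would record two endpoint facts. First, the identity $\tilde V(u_\eta^+)=V(u_\eta^+)=\eta$ forces $(u_\eta^+-\lambda)^2=A^2$; since the minimizer of $V$ sits at $k^+=\tfrac{\lambda+\sqrt{\lambda^2+4}}{2}>\lambda$ and $u_\eta^+\geq k^+$, the correct root is $u_\eta^+-\lambda=A>0$, and in particular $A>0$. Second, $\tilde V(u_\eta^-)=\eta-2\log(u_\eta^+/u_\eta^-)<\eta$ places $u_\eta^-$ strictly inside $[\lambda-A,\lambda+A]$, so $\big|\tfrac{u_\eta^--\lambda}{A}\big|<1$ and the arcsine in the conclusion is well-defined.

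The final step is the substitution $u-\lambda=A\sin\phi$, under which the right-hand integrand collapses to $2\,d\phi$. With limits $\phi=\sin^{-1}\tfrac{u_\eta^--\lambda}{A}$ and $\phi=\tfrac{\pi}{2}$, the integral evaluates to
\begin{equation*}
\pi-2\sin^{-1}\tfrac{u_\eta^--\lambda}{A}\;=\;\pi+2\sin^{-1}\tfrac{\lambda-u_\eta^-}{A},
\end{equation*}
which is a lower bound of the stated form (the coefficient $2$ in front of the arcsine is what pairs consistently with the $O(1/\sqrt{\eta})$ rate produced by the upper bound of the previous proposition). The only real obstacle is the endpoint bookkeeping above—namely checking the sign of $u_\eta^+-\lambda$, confirming $A^2>0$, and verifying $|u_\eta^--\lambda|<A$; once the comparison function $\tilde V$ is written down, no further analytic estimate is needed.
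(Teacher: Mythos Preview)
Your argument is exactly the paper's: replace $-2\log u$ by $-2\log u_\eta^+$ to bound $V$ from below by a quadratic, complete the square, and integrate. Your extra endpoint bookkeeping (checking $u_\eta^+-\lambda=A>0$ and $|u_\eta^--\lambda|<A$) is a welcome addition that the paper leaves implicit. You are also right about the coefficient: the antiderivative of $2/\sqrt{A^2-(u-\lambda)^2}$ is $2\sin^{-1}\frac{u-\lambda}{A}$, so the integral evaluates to $\pi+2\sin^{-1}\frac{\lambda-u_\eta^-}{A}$, not $\pi+\sin^{-1}\frac{\lambda-u_\eta^-}{A}$ as the paper writes; the factor of $2$ is dropped in the paper's final line, but this is immaterial for the only use made of the bound (showing $\triangle\theta\to\pi$ as $\eta\to\infty$).
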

\begin{proof}
To get a lower bound of $\triangle\theta$, use $\log u\leq\log u_\eta^+$ when $u_\eta^-\leq u\leq u_\eta^+$.
\begin{equation}
\begin{split}
\triangle\theta&=\int_{u_\eta^-}^{u_\eta^+}\frac{2du}{\sqrt{\eta-V(u)}}\\
&\geq\int_{u_\eta^-}^{u_\eta^+}\frac{2du}{\sqrt{\eta-u^2+2\lambda u+2\log u_\eta^+}}\\
&=\int_{u_\eta^-}^{u_\eta^+}\frac{2du}{\sqrt{(\eta+2\log u_\eta^++\lambda^2)-(u-\lambda)^2}}\\
&=\pi+\sin^{-1}\frac{\lambda-u_\eta^-}{\sqrt{\eta+2\log u_\eta^++\lambda^2}}.
\end{split}
\end{equation}
\end{proof}

Combining both the upper bound and the lower bound, we can get the limit of $\triangle\theta$ when the energy $\eta$ goes to infinity.

\begin{prop}
When the energy $\eta$ goes to infinity, \begin{equation}
\lim_{\eta\to\infty}\triangle\theta=\pi.
\end{equation}
\end{prop}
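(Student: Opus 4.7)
The plan is a direct squeeze argument using the two preceding propositions, so the only real work is to verify that the ``error terms'' in both bounds vanish as $\eta \to \infty$.

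For the upper bound, the previous proposition gives, for any fixed $L>1$,
\begin{equation*}
\triangle\theta \leq \pi + 2\Bigl(\lambda - 1 + \sqrt{\tfrac{L}{L-1}}\Bigr)\frac{1}{\sqrt{\eta}} + o\bigl(\tfrac{1}{\sqrt{\eta}}\bigr),
\end{equation*}
so letting $\eta \to \infty$ immediately yields $\limsup_{\eta\to\infty}\triangle\theta \leq \pi$. No additional manipulation of $L$ is needed, since the leading term is already $\pi$.

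For the lower bound, I need to show that the extra arcsine term in the preceding proposition vanishes, i.e.\ that
\begin{equation*}
\frac{\lambda - u_\eta^-}{\sqrt{\eta + 2\log u_\eta^+ + \lambda^2}} \longrightarrow 0 \quad \text{as } \eta \to \infty.
\end{equation*}
This reduces to tracking the asymptotics of $u_\eta^\pm$ using $V(u) = u^2 - 2\lambda u - 2\log u$. At the right endpoint, the dominant term at large $u$ is $u^2$, so $V(u_\eta^+) = \eta$ forces $u_\eta^+ \sim \sqrt{\eta}$, and hence $\log u_\eta^+ = \tfrac{1}{2}\log\eta + O(1) = o(\eta)$; this makes the denominator behave like $\sqrt{\eta}$. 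At the left endpoint, the dominant term at small $u$ is $-2\log u$, so $V(u_\eta^-) = \eta$ forces $u_\eta^- \to 0$ (in fact exponentially fast, $u_\eta^- = O(e^{-\eta/2})$); in particular the numerator $\lambda - u_\eta^-$ stays bounded. The ratio therefore tends to $0$, the $\sin^{-1}$ tends to $0$, and $\liminf_{\eta\to\infty}\triangle\theta \geq \pi$.

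Combining the two inequalities gives the claimed limit $\lim_{\eta\to\infty}\triangle\theta = \pi$. The main (mild) obstacle is just the asymptotic analysis of $u_\eta^\pm$; since $V$ is explicit with a single dominant term in each regime, this is routine and no delicate estimate is required.
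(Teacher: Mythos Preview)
Your proposal is correct and follows exactly the paper's approach: squeeze $\triangle\theta$ between the upper and lower bounds of the two preceding propositions and observe that both tend to $\pi$. The paper's own proof is a terse two-line version of what you wrote, simply asserting $u_\eta^+\to\infty$ and $u_\eta^-\to 0$; your added asymptotics for $u_\eta^\pm$ (from the dominant $u^2$ and $-2\log u$ terms of $V$) just make explicit why those limits hold and why the arcsine term vanishes.
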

\begin{proof}
As $\eta$ goes to infinity, $u_\eta^+$ goes to infinity and $u_\eta^-$ goes to zero. Therefore, the upper bound and the lower bound established above both approach $\pi$ as $\eta$ goes to infinity.
\end{proof}

\begin{cor}
When $\lambda<0$, $\triangle\theta<\pi$ for $\eta$ large enough.
\end{cor}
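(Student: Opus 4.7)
My plan is to deduce the corollary directly from the upper bound established in the preceding proposition, namely
\[
\triangle\theta\leq\pi+2\Bigl(\lambda-1+\sqrt{\tfrac{L}{L-1}}\Bigr)\frac{1}{\sqrt{\eta}}+o\bigl(\tfrac{1}{\sqrt{\eta}}\bigr),
\]
which holds for every $L>1$. The only thing that needs to be arranged is that the coefficient of $1/\sqrt{\eta}$ be made strictly negative when $\lambda<0$.

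First, I would observe that $\sqrt{L/(L-1)}\to 1$ as $L\to\infty$, so $\lambda-1+\sqrt{L/(L-1)}\to\lambda<0$. Concretely, I would fix $L$ large enough that $\sqrt{L/(L-1)}<1+|\lambda|/2$, which forces
\[
\lambda-1+\sqrt{\tfrac{L}{L-1}}<\lambda+|\lambda|/2=-|\lambda|/2<0.
\]
With $L$ chosen and fixed, the leading term on the right-hand side of the inequality has a strictly negative coefficient, so for all $\eta$ large enough the error term $o(1/\sqrt{\eta})$ is dominated by this negative leading term and the whole expression is strictly less than $\pi$. This gives $\triangle\theta<\pi$ for $\eta$ large.

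There is essentially no obstacle: the proposition does all the analytic work, and the corollary is just a matter of choosing the free parameter $L$ to exploit the negativity of $\lambda$. The only care needed is to fix $L$ before letting $\eta\to\infty$, so that the implicit constants hidden in the $o(\cdot)$ term depend only on $L$ and $\lambda$ and not on $\eta$.
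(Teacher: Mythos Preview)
Your proof is correct and follows essentially the same approach as the paper: invoke the upper bound from the preceding proposition and choose $L$ large enough that the coefficient $\lambda-1+\sqrt{L/(L-1)}$ is negative, which is possible precisely because $\lambda<0$. The paper's proof is terser but identical in substance.
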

\begin{proof}
Since
\begin{equation}
\triangle\theta\leq\pi+2\Big(\lambda-1+\sqrt{\frac{L}{L-1}}\Big)\frac{1}{\sqrt{\eta}}+o(\frac{1}{\sqrt{\eta}})
\end{equation}
for arbitrary $L>1$, choose $L$ large enough so that $\lambda-1+\sqrt{\frac{L}{L-1}}<0$.
\end{proof}

With the knowledge of the behavior of $\triangle\theta$ for small energy and large energe, we can proof the theorem concerning the case $\lambda<0$.

\begin{proof}[Proof of theorem \ref{neglambdanear0}]
For $\frac{-2}{\sqrt{3}}<\lambda$, when $\eta\to\min V(B)^+$, the limit of $\triangle\theta$ is greater than $\pi$. For $\lambda<0$, $\triangle\theta<\pi$ when $\eta$ is large enough. From continuity, there exists $\eta$ such that $\triangle\theta$ is exactly $\pi$.
\end{proof}

\begin{proof}[Proof of Theorem \ref{neglambdanearinfty}]
For any $\lambda<\frac{-7}{2\sqrt{2}}$, when $\eta\to\min V(B)^+$, the limit of $\triangle\theta$ is less than $\frac{2\pi}{3}$. When $\eta$ is large enough, $\triangle\theta$ approaches $\pi$. From continuity, there exist $\eta$ such that $\triangle\theta$ is exactly $\frac{2\pi}{m}$ for some integer $m>2$.

$\triangle\theta$ is decreasing when $\eta$ is near $\min V(B)$. 
When $\lambda=\frac{-7}{2\sqrt{2}}$, $\min_\eta\triangle\theta<\frac{2\pi}{3}$. From the continuity, the range for $\lambda$ can be extend a little higher than $\frac{-7}{2\sqrt{2}}$.
\end{proof}

\subsection{Relation between $\lambda$ and $\triangle\theta$}
For the case $\lambda>0$, the behavior is similar to the original case for self shrinking curve in Abresch and Langer's paper. We want to compare the change of angle with the case $\lambda=0$.

For simplicity, use $k$ for $k^+$ which is a function of $\lambda$. Translate the minimum point of $V_\lambda(u)$ to the origin, define $\hat{V}_\lambda(u)=V_\lambda(u+k)-\min V_\lambda$, where $\min V_\lambda=V_\lambda(k)=k^2-2\lambda k-2\log k$. Let $\bar{\eta}=\eta-\min V_\lambda$ be the energy relative to the minimum. We have the following theorem:

\begin{thm}
With the setting above, for a fixed $\bar{\eta}$, $\triangle\theta$ is increasing with respect to $\lambda$.
\end{thm}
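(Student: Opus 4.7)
The plan is to prove $\partial\triangle\theta/\partial\lambda|_{\bar\eta}>0$ by reparametrizing the defining integral so that its limits of integration do not depend on $\lambda$, then differentiating under the integral sign. Since $k:=k^+(\lambda)=(\lambda+\sqrt{\lambda^2+4})/2$ is strictly increasing in $\lambda$, it suffices to show $\partial\triangle\theta/\partial k|_{\bar\eta}>0$. After the translation $u=k+w$ suggested in the statement, the formula for $\triangle\theta$ becomes $\int_{w^-}^{w^+}2dw/\sqrt{\bar\eta-\hat V_\lambda(w)}$ with $\hat V_\lambda(w)=w^2+2w/k-2\log(1+w/k)$, and this potential is strictly monotone on each of the two branches $[w^-,0]$ and $[0,w^+]$.

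First I will split the integral at $w=0$ and apply, on each monotone branch, the angle substitution $\hat V_\lambda(w)=\bar\eta\sin^2\theta$ with $\theta\in[0,\pi/2]$, producing two smooth branches $w=w_\pm(\theta,k)$. The identity $\sqrt{\bar\eta-\hat V_\lambda(w)}=\sqrt{\bar\eta}\cos\theta$ combined with the implicit relation $\hat V_\lambda'(w)\,dw=2\bar\eta\sin\theta\cos\theta\,d\theta$ yields the reparametrization
\[
\triangle\theta=\int_0^{\pi/2}4\sqrt{\bar\eta}\sin\theta\left[\frac{1}{\hat V_\lambda'(w_+(\theta,k))}+\frac{1}{|\hat V_\lambda'(w_-(\theta,k))|}\right]d\theta.
\]
The integrand is continuous on $[0,\pi/2]$ (the apparent singularity at $\theta=0$ is removable, since $\hat V_\lambda'(w)\sim\hat V_\lambda''(0)w$ and $w\sim\sin\theta\sqrt{2\bar\eta/\hat V_\lambda''(0)}$ as $\theta\to 0$), and its limits of integration are now independent of $k$, so differentiation under the integral sign is justified.

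It then suffices to show that $\tfrac{d}{dk}[\hat V_\lambda'(w_\pm(\theta,k))]$ has the opposite sign to $w_\pm$ at fixed $\theta,\bar\eta$, since this forces $|\hat V_\lambda'(w_\pm)|$ to decrease strictly in $k$ and each bracketed summand in the integrand above to increase strictly. Starting from $\partial_k\hat V_\lambda|_w=-2w^2/[k^2(k+w)]$, $\partial_k\hat V_\lambda'|_w=-2w(2k+w)/[k^2(k+w)^2]$, $\hat V_\lambda''(w)=2+2/(k+w)^2$, and the representation $\hat V_\lambda'(w)=2w[k(k+w)+1]/[k(k+w)]$, together with the implicit derivative $(dw/dk)|_\theta=-\partial_k\hat V_\lambda|_w/\hat V_\lambda'(w)$, the total derivative will reduce to
\[
\frac{d}{dk}\bigl[\hat V_\lambda'(w(\theta,k))\bigr]=-\frac{2w(k^2+1)}{k^2(k+w)[k(k+w)+1]},
\]
whose sign is exactly $-\operatorname{sign}(w)$, completing the argument.

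The hard part will be the algebraic simplification in this last step: one must verify the polynomial identity $-(2k+w)[k(k+w)+1]+k[(k+w)^2+1]=-(k+w)(k^2+1)$, which is what produces the decisive sign-definite cancellation responsible for monotonicity on both branches. Everything else---the monotonicity of $k^+(\lambda)$ in $\lambda$, the reparametrization onto a fixed interval in $\theta$, and the differentiation under the integral sign---is routine.
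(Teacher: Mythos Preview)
Your argument is correct. The algebra checks out: with $k-\lambda=1/k$ one has $\hat V_\lambda(w)=w^2+2w/k-2\log(1+w/k)$, the partial derivatives you list are right, and the identity $-(2k+w)[k(k+w)+1]+k[(k+w)^2+1]=-(k+w)(k^2+1)$ follows from the substitution $s=k+w$. The resulting expression
\[
\frac{d}{dk}\bigl[\hat V_\lambda'(w(\theta,k))\bigr]=-\frac{2w(k^2+1)}{k^2(k+w)[k(k+w)+1]}
\]
has sign $-\operatorname{sign}(w)$, so each summand in your reparametrized integrand is strictly increasing in $k$ for $\theta\in(0,\pi/2]$, and the conclusion follows. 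Your regularity check near $\theta=0$ also extends to the $k$-derivative of the integrand (since $\frac{d}{dk}\hat V_\lambda'(w)\sim -2w/k^3$ while $(\hat V_\lambda'(w))^2\sim(\hat V_\lambda''(0))^2w^2$ and $\sin\theta\sim cw$), so differentiation under the integral is indeed justified.

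The paper's proof rests on the same core computation---that the total derivative of $\hat V_\lambda'$ along a level set of $\hat V_\lambda$ has the opposite sign to $u$---but packages it differently. Instead of reparametrizing onto a fixed $\theta$-interval and differentiating, the paper fixes $\lambda_1<\lambda_2$, shifts the graph of $\hat V_{\lambda_2}$ on each branch so that the outer endpoints match those of $\hat V_{\lambda_1}$, uses the sign of $\partial(\hat V_\lambda')/\partial\lambda$ to order the two potentials pointwise on each branch, and then compares the two integrals directly. Your approach is more analytic (and yields the strict inequality via a pointwise positive integrand), while the paper's is a finite-comparison argument that avoids any discussion of differentiating an improper integral; both hinge on exactly the same sign-definite cancellation you isolated.
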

\begin{proof}
Note that in this setting,
\begin{equation}
\triangle\theta=\int^{u^+_{\bar{\eta},\lambda}}_{u^-_{\bar{\eta},\lambda}}\frac{2du}{\sqrt{\bar{\eta}-\hat{V}_\lambda(u)}},
\end{equation}
where $u^\pm_{\bar{\eta},\lambda}$ are the positive and negative solution of $\bar{\eta}=\hat{V}_\lambda(u)$, respectively.

Now, for a fixed $\bar{\eta}$, we want to know the relation between $\lambda$ and $u$ when $\bar{\eta}=\hat{V}_\lambda(u)$. Differentiate the equation with respect to $\lambda$, we have
\begin{equation}
\begin{split}
0&=2\Big((u+k)-\lambda-\frac{1}{u+k}\Big)(\frac{du}{d\lambda}+\frac{dk}{d\lambda})-2(u+k)-2(k-\lambda-\frac{1}{k})\frac{dk}{d\lambda}+2k\\
&=2\frac{(u+k)^2-\lambda(u+k)-1}{u+k}(\frac{du}{d\lambda}+\frac{dk}{d\lambda})-2u\\
&=2\frac{u^2+2ku-\lambda u}{u+k}(\frac{du}{d\lambda}+\frac{dk}{d\lambda})-2u\\
&=2u\Big[\frac{u+2k-\lambda}{u+k}(\frac{du}{d\lambda}+\frac{dk}{d\lambda})-1\Big].
\end{split}
\end{equation}
Therefore,
\begin{equation}
\frac{du}{d\lambda}+\frac{dk}{d\lambda}=\frac{u+k}{u+2k-\lambda}.
\end{equation}
Since $k=\frac{\lambda+\sqrt{\lambda^2+4}}{2}$, we have
\begin{equation}
\frac{dk}{d\lambda}=\frac{1+\frac{\lambda}{\lambda^2+4}}{2}=\frac{\sqrt{\lambda^2+4}+\lambda}{2\sqrt{\lambda^2}+4}=\frac{k}{2k-\lambda},
\end{equation}
and
\begin{equation}
\frac{du}{d\lambda}=\frac{u+k}{u+2k-\lambda}-\frac{k}{2k-\lambda}=\frac{(k-\lambda)u}{(u+2k-\lambda)(2k-\lambda)}.
\end{equation}
Since $k-\lambda$, $2k-\lambda$, $u+k$ are all positive, $\frac{du}{d\lambda}$ has the same sign as $u$, i.e. $\frac{\partial u^+_{\bar{\eta},\lambda}}{\partial\lambda}>0$, $\frac{\partial u^-_{\bar{\eta},\lambda}}{\partial\lambda}<0$.

Starting from $\hat{V}'_\lambda(u)=2(u+k-\lambda-\frac{1}{u+k})$, for any $\tilde{\eta}<\bar{\eta}$, we want to know the change of the slope of $\hat{V}_\lambda'$ at $u^\pm_{\tilde{\eta},\lambda}$ with respect to $\lambda$. Differentiating the equation with respect to $\lambda$, we have
\begin{equation}
\begin{split}
\frac{d}{d\lambda}(\hat{V}'_\lambda(u))&=2\Big((1+\frac{1}{(u+k)^2})(\frac{du}{d\lambda}+\frac{dk}{d\lambda})-1\Big)\\
&=2\Big((1+\frac{1}{(u+k)^2})\frac{u+k}{u+2k-\lambda}-1\Big)\\
&=\frac{(\lambda-k)u}{(u+k)(u+2k-\lambda)}.
\end{split}
\end{equation}
Note that $\lambda-k<0$ and therefore $\frac{d}{d\lambda}(\hat{V}'_\lambda(u))$ and $u$ have the opposite sign.

Now, fix $\bar{\eta}$, $\lambda_1<\lambda_2$. Since $\frac{du}{d\lambda}$ and $u$ has the same sign, we have $u^-_{\bar{\eta},\lambda_2}<u^-_{\bar{\eta},\lambda_1}<0<u^+_{\bar{\eta},\lambda_1}<u^+_{\bar{\eta},\lambda_2}$. Consider the function $\hat{V}_{\lambda_1}(u)$ and $\hat{V}_{\lambda_2}(u+u^+_{\bar{\eta},\lambda_2}-u^+_{\bar{\eta},\lambda_1})$, Both of them have the same value $\bar{\eta}$ at $u=u^+_{\bar{\eta},\lambda_1}$. Now, for all fixed $\tilde{\eta}\in(0,\bar{\eta})$, $\frac{d}{d\lambda}(\hat{V}'_\lambda(u))$ and $u$ have the opposite sign, we have
\begin{equation}
\frac{\partial u^+_{\tilde{\eta},\lambda_1}}{\partial\tilde{\eta}}=\frac{1}{\hat{V}'_{\lambda_1}(u^+_{\tilde{\eta},\lambda_1})}<\frac{1}{\hat{V}'_{\lambda_2}(u^+_{\tilde{\eta},\lambda_2})}=\frac{\partial u^+_{\tilde{\eta},\lambda_2}}{\partial\tilde{\eta}}.
\end{equation}
Therefore, for any $\tilde{\eta}\in(0,\bar{\eta})$, $u^+_{\tilde{\eta},\lambda_1}>u^+_{\tilde{\eta},\lambda_2}-(u^+_{\bar{\eta},\lambda_2}-u^+_{\bar{\eta},\lambda_1})$, i.e. the graph of $(u^+_{\tilde{\eta},\lambda_1},\tilde{\eta})$ lies on the right of the graph of $(u^+_{\tilde{\eta},\lambda_2}-(u^+_{\bar{\eta},\lambda_2}-u^+_{\bar{\eta},\lambda_1}),\tilde{\eta})$. Since $\hat{V}'_\lambda(u^+_{\tilde{\eta},\lambda})>0$, it implies that $\hat{V}_{\lambda_1}(u)<\hat{V}_{\lambda_2}(u+u^+_{\bar{\eta},\lambda_2}-u^+_{\bar{\eta},\lambda_1})$ for $u\in(0, u^+_{\bar{\eta},\lambda_1})$. We can employ the same argument for the negative part and obtain $\hat{V}_{\lambda_1}(u)<\hat{V}_{\lambda_2}(u+u^-_{\bar{\eta},\lambda_2}-u^-_{\bar{\eta},\lambda_1})$ for $u\in(u^-_{\bar{\eta},\lambda_1},0)$.
 
Therefore,
\begin{equation}
\begin{split}
\triangle\theta(\bar{\eta},\lambda_1)=&\int_{u^-_{\bar{\eta},\lambda_1}}^{u^+_{\bar{\eta},\lambda_1}}
\frac{2du}{\sqrt{\bar{\eta}-\hat{V}_{\lambda_1}(u)}}
=\int_{u^-_{\bar{\eta},\lambda_1}}^0
\frac{2du}{\sqrt{\bar{\eta}-\hat{V}_{\lambda_1}(u)}}+\int_0^{u^+_{\bar{\eta},\lambda_1}}
\frac{2du}{\sqrt{\bar{\eta}-\hat{V}_{\lambda_1}(u)}}\\
<&\int_{u^-_{\bar{\eta},\lambda_1}}^0
\frac{2du}{\sqrt{\bar{\eta}-\hat{V}_{\lambda_2}(u+u^-_{\bar{\eta},\lambda_2}-u^-_{\bar{\eta},\lambda_1})}}\\
&+\int_0^{u^+_{\bar{\eta},\lambda_1}}
\frac{2du}{\sqrt{\bar{\eta}-\hat{V}_{\lambda_2}(u+u^+_{\bar{\eta},\lambda_2}-u^+_{\bar{\eta},\lambda_1})}}\\
=&\int_{u^-_{\bar{\eta},\lambda_2}}^{u^-_{\bar{\eta},\lambda_2}-u^-_{\bar{\eta},\lambda_1}}
\frac{2du}{\sqrt{\bar{\eta}-\hat{V}_{\lambda_2}(u)}}+\int_{u^+_{\bar{\eta},\lambda_2}-u^+_{\bar{\eta},\lambda_1}}^{u^+_{\bar{\eta},\lambda_2}}
\frac{2du}{\sqrt{\bar{\eta}-\hat{V}_{\lambda_2}(u)}}\\
<&\int_{u^-_{\bar{\eta},\lambda_2}}^{u^+_{\bar{\eta},\lambda_2}}
\frac{2du}{\sqrt{\bar{\eta}-\hat{V}_{\lambda_2}(u)}}=\triangle\theta(\bar{\eta},\lambda_2).
\end{split}
\end{equation}
\end{proof}


Now, compare the case $\lambda>0$ with the self shrinker case.

\begin{proof}[Proof of theorem \ref{poslambda}]
Use the result in \cite{AL} that when $\lambda=0$, $\triangle\theta(\bar{\eta},0)$ is a decreasing function of $\bar{\eta}$, $\lim_{\bar{\eta}\to0}\triangle\theta(\eta,0) =\sqrt{2}\pi$, $\lim_{\bar{\eta}\to\infty}\triangle\theta(\eta,0) =\pi$. Hence for all $\bar{\eta}$, $\triangle\theta(\eta,0)>\pi$. Plugging $\lambda_1=0$, $\lambda_2=\lambda$ in the previous theorem, we get $
\triangle\theta(\bar{\eta},\lambda)>\triangle\theta(\bar{\eta},0)>\pi$ for $\lambda>0$. 
\end{proof}

\section{Simulation of the curves}
By using Matlab program to solve the ODE system, we can obtain numerical solutions which approximately solve the ODE. The following are the curves of the numerical solutions. The curves behave as what expected from the theorem \ref{poslambda}, theorem \ref{neglambdanear0} and theorem \ref{neglambdanearinfty}. We put the pictures of the curves here to give the reader better idea of what the actual solution will behave. From the simulation, we can observe some behavior of $\triangle\theta$ with respect to $\eta$. Some conjectures about the behavior are posed here.

When $\lambda>0$, the range for $\triangle\theta$ contains $(\pi, \pi\sqrt{2}\sqrt{\frac{\lambda}{\sqrt{\lambda^2+4}}+1}]$ and $\triangle\theta>\pi$. There will not be embedded solutions. The following are some of the closed solutions for the case $\lambda=0.19$ and $\lambda=0.726$. The energy $\eta$ increases from left to right. Note that for certain $\eta$, the solution passes through the origin. If we keep increasing $\eta$, unlike the case where $\lambda=0$, the origin will not be on the same side of the solution anymore. Also, we can conjecture that $\triangle\theta$ is decreasing when $\eta$ is increasing, as in the case for self-shrinkers.

\begin{con} When $\lambda>0$, $\triangle\theta$ is monotonically decreasing with respect to $\eta$ as in the case of Abresch and Langer\cite{AL}.
\end{con}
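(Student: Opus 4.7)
The plan is to show $\partial_\eta\triangle\theta(\eta,\lambda)<0$ for all $\eta>\min V_\lambda$ when $\lambda>0$, by casting $\triangle\theta$ into the Schaaf form in which the energy dependence is packaged into a single half-line integral. Write $\bar\eta=\eta-\min V_\lambda$, let $u^\pm(z)$ denote the two branches of $V_\lambda(u)-\min V_\lambda=z^2$ with $u^-(z)\le k^+\le u^+(z)$, and set
\[
\Psi(z)\;=\;\frac{2z}{V_\lambda'(u^+(z))}\;-\;\frac{2z}{V_\lambda'(u^-(z))}.
\]
Both summands extend continuously to $z=0$ with common value $\sqrt{2/V_\lambda''(k^+)}$. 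The change of variables $z=\sqrt{\bar\eta}\sin\phi$ then yields
\[
\triangle\theta(\bar\eta,\lambda)\;=\;2\int_0^{\pi/2}\Psi\bigl(\sqrt{\bar\eta}\sin\phi\bigr)\,d\phi,
\]
and differentiating under the integral reduces the conjecture to the pointwise inequality $\Psi'(z)\le 0$ on $(0,\infty)$.

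I would next compute $\Psi'$ by implicit differentiation of $V_\lambda(u^\pm)=\min V_\lambda+z^2$, which gives $du^\pm/dz=2z/V_\lambda'(u^\pm)$, and then attempt to rewrite
\[
\Psi'(z)\;=\;\int_{u^-(z)}^{u^+(z)}\!\!P\bigl(u,V_\lambda',V_\lambda'',V_\lambda''',V_\lambda''''\bigr)\,\frac{du}{V_\lambda'(u)^2},
\]
where $P$ is a universal polynomial expression in the derivatives of $V_\lambda$. This rearrangement is the standard Chicone/Schaaf manipulation, and at leading order as $z\to 0^+$ the sign of $P$ at $u=k^+$ is governed by $5(V_\lambda''')^2-4V_\lambda''V_\lambda''''$, which was already computed to be negative pointwise in Section~3.1. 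The goal would be to show that for the explicit potential $V_\lambda(u)=u^2-2\lambda u-2\log u$ with $\lambda\ge 0$ the integrand stays of the right sign throughout the entire interval $[u^-(z),u^+(z)]$, not merely near $k^+$.

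The main obstacle is promoting the already-known \emph{local} sign condition to a genuinely \emph{global} one. The logarithmic term in $V_\lambda$ makes $V_\lambda''$ unbounded as $u\to 0^+$, which is exactly the regime controlling $u^-(z)$ for large $z$, so direct pointwise comparisons will degenerate. Two natural fallbacks are: (i) exploit the already-proved monotonicity $\partial_\lambda\triangle\theta>0$ together with the Abresch--Langer monotonicity at $\lambda=0$ in a Sturm-type comparison, using the known limits $\triangle\theta\to\pi\sqrt{2}\sqrt{\lambda/\sqrt{\lambda^2+4}+1}$ at the bottom and $\triangle\theta\to\pi$ at infinity to pin down the boundary behaviour; or (ii) split $\Psi$ as $V_\lambda$ plus its shifted quadratic model tangent to $V_\lambda$ at $u^\pm$, and control the correction using $V_\lambda'''(u)=-4/u^3<0$ and the convexity of $V_\lambda$. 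I expect the passage from local to global to be the hard part, with the numerical evidence in Section~4 playing the role of both motivation and consistency check.
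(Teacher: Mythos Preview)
The statement you are attempting to prove is labeled as a \emph{Conjecture} in the paper, not a theorem. It appears in Section~4 among the observations drawn from numerical simulation, introduced by ``Some conjectures about the behavior are posed here.'' The paper offers no proof, so there is nothing to compare your proposal against.

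As for the proposal itself: your outline is a sensible invocation of the Chicone/Schaaf period-monotonicity machinery, and you correctly note that the local sign condition $5(V''')^2-4V''V''''<0$ at the minimum was already checked in Section~3.1. But, as you yourself concede, promoting this to a global statement is the whole problem, and neither of your fallbacks closes the gap. Fallback~(i) does not work: monotonicity of $\triangle\theta$ in $\lambda$ at fixed $\bar\eta$ (Theorem~3.9) combined with monotonicity in $\bar\eta$ at $\lambda=0$ (Abresch--Langer) does not imply monotonicity in $\bar\eta$ for $\lambda>0$; a function can be increasing in one variable and decreasing along one slice of the other without being monotone along every parallel slice. Fallback~(ii) is not specified concretely enough to assess, and the logarithmic blow-up of $V_\lambda''$ near $u=0$ that you flag is precisely what defeats naive comparison with a quadratic model on the $u^-$ side for large energy.

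In short: this is an informed and honest sketch of where the difficulty lies, but it is not a proof, and the paper does not claim to have one either.
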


\begin{figure}[ht]
\centering
\includegraphics[width=90pt]{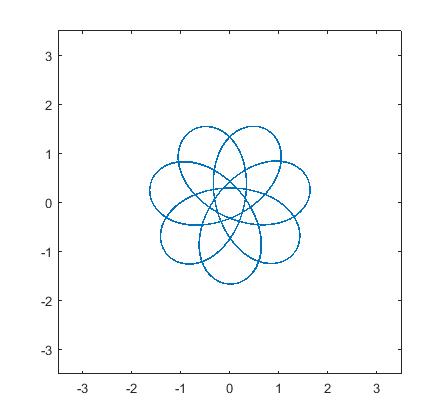}
\includegraphics[width=90pt]{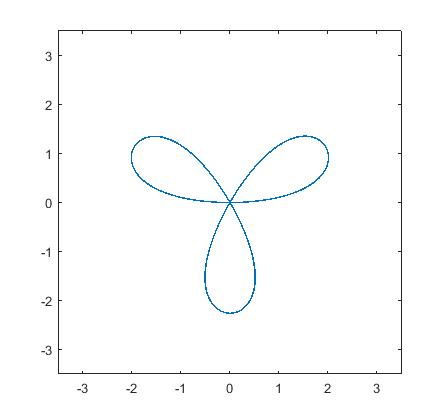}
\includegraphics[width=90pt]{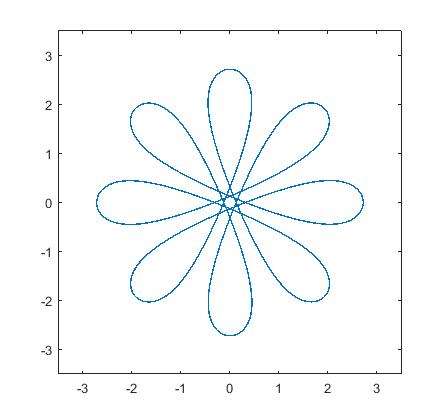}
\includegraphics[width=90pt]{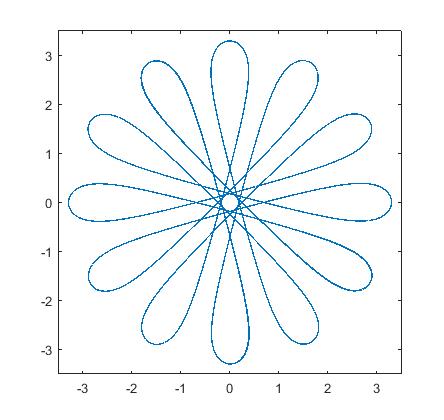}
\caption{Solutions for $\lambda=0.19$, $\triangle\theta=\frac{10\pi}{7}$, $\frac{4\pi}{3}$, $\frac{5\pi}{4}$, $\frac{7\pi}{6}$, respectively.}
\end{figure}

\begin{figure}[ht]
\centering
\includegraphics[width=90pt]{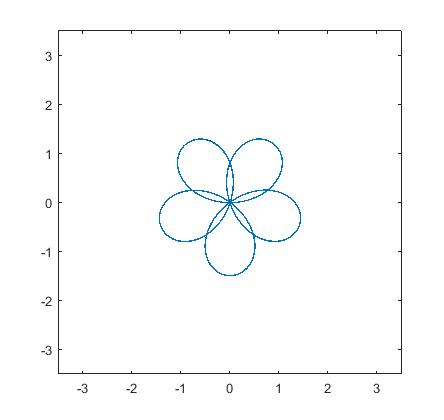}
\includegraphics[width=90pt]{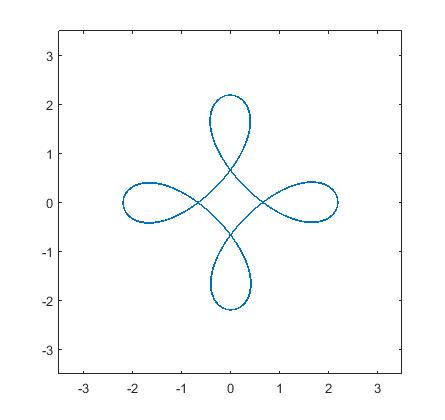}
\includegraphics[width=90pt]{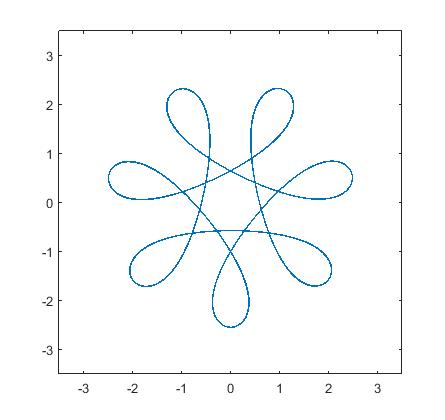}
\includegraphics[width=90pt]{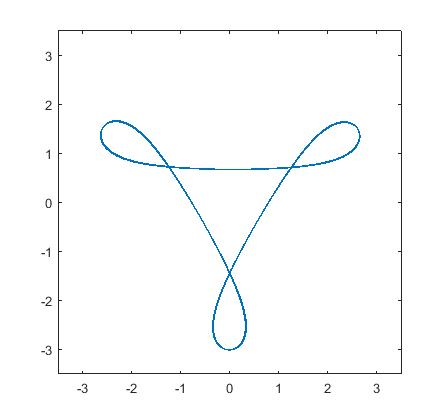}
\caption{Solutions for $\lambda=0.726$, $\triangle\theta=\frac{8\pi}{5}$, $\frac{3\pi}{2}$, $\frac{10\pi}{7}$, $\frac{4\pi}{3}$, respectively.}
\end{figure}

\bigskip
\bigskip
\bigskip
\bigskip
\bigskip
\bigskip
\bigskip
\bigskip
\bigskip
\bigskip
\bigskip
The $\lambda<0$ cases are more interesting. For each $\frac{-2}{\sqrt{3}}<\lambda<0$, there exist $\eta$ such that $\triangle\theta=\pi$ and the solution is embedded and have 2-symmetry. The following are some of the examples. From left to right, $\lambda=$-0.2, -0.3, -0.4, -0.5, -0.6, -0.7, -0.8, -0.9, respectively.

\begin{con}
There is a unique $\delta>0$ such that for $\frac{-7}{2\sqrt{2}}+\delta<\lambda\leq\frac{-2}{\sqrt{3}}$, there are no embedded solutions and there is a 3-symmetry solution when $\lambda\leq\frac{-7}{2\sqrt{2}}+\delta$.
\end{con}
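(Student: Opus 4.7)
The plan is to introduce the function
\[
\mu(\lambda) := \inf_{\bar{\eta} > 0} \triangle\theta(\bar{\eta}, \lambda),
\]
and identify the threshold at which $\mu(\lambda) = 2\pi/3$. The key tool is the monotonicity theorem above, which states that for fixed $\bar{\eta}$, $\triangle\theta(\bar{\eta}, \lambda)$ is strictly increasing in $\lambda$.

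First I would argue that for $\lambda \leq -2/\sqrt{3}$, the infimum $\mu(\lambda)$ is attained at some interior $\bar{\eta}^*(\lambda) \in (0, \infty)$: the initial value as $\bar{\eta} \to 0^+$ is $\pi\sqrt{2}\sqrt{\lambda/\sqrt{\lambda^2+4}+1} \leq \pi$, the function is strictly decreasing near $\bar{\eta} = 0$ (by the near-minimum proposition), and $\triangle\theta \to \pi$ as $\bar{\eta} \to \infty$. So a positive interior minimizer exists. Strict monotonicity of $\triangle\theta$ in $\lambda$ at $\bar{\eta}^*$ then shows that $\mu(\lambda)$ is strictly increasing and continuous on $(-\infty, -2/\sqrt{3}]$.

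Next, I would evaluate $\mu$ at the endpoints. At $\lambda = -7/(2\sqrt{2})$ the initial value is exactly $2\pi/3$, so $\mu(-7/(2\sqrt{2})) < 2\pi/3$ since $\triangle\theta$ is strictly decreasing near $\bar{\eta} = 0$. To prove $\mu(-2/\sqrt{3}) > 2\pi/3$, I would sharpen the upper bound from the large-energy proposition, using the specific form of $V(u) = u^2 - 2\lambda u - 2\log u$ at $\lambda = -2/\sqrt{3}$ to extract a quantitative lower bound on $\triangle\theta$ uniform in $\bar{\eta}$. The intermediate value theorem applied to the strictly increasing continuous $\mu$ then produces a unique $\lambda^* \in (-7/(2\sqrt{2}), -2/\sqrt{3})$ with $\mu(\lambda^*) = 2\pi/3$, and we set $\delta := \lambda^* + 7/(2\sqrt{2})$.

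Finally, I would rule out embedded solutions for $\lambda \in (\lambda^*, -2/\sqrt{3}]$. For any integer $m \geq 3$, the required rotation $2\pi/m \leq 2\pi/3 < \mu(\lambda)$ lies outside the image of $\triangle\theta$, so no $m$-symmetric solution exists. For $m=2$ one would need $\triangle\theta = \pi$ somewhere, and by the strict monotonicity of $\triangle\theta$ in $\lambda$ this reduces to showing $\triangle\theta(\bar{\eta}, -2/\sqrt{3}) < \pi$ for every $\bar{\eta} > 0$. The main obstacle lies here: none of the estimates established so far prevents $\triangle\theta$ from oscillating above $\pi$ after its initial decrease from the boundary value $\pi$. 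The natural route is to establish global monotonicity (or at least unimodality) of $\triangle\theta$ in $\bar{\eta}$ --- differentiating the integral representation $\triangle\theta = \int_{u_\eta^-}^{u_\eta^+} 2(\eta - V(u))^{-1/2}\,du$ with respect to $\eta$ and extracting a Picard--Fuchs type identity is a plausible tactic --- or, more weakly, to show that $\sup_{\bar{\eta}}\triangle\theta(\bar{\eta},-2/\sqrt{3}) \leq \pi$ with equality only as $\bar{\eta}\to 0^+$. I expect this global monotonicity analysis to be the most substantial new ingredient, of comparable depth to Abresch--Langer's monotonicity result for the $\lambda = 0$ self-shrinker.
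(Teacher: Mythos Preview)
The statement you are attempting to prove is not a theorem in the paper: it is explicitly labeled as a \emph{conjecture} (Conjecture~4.2), posed on the basis of the numerical simulations in Section~4, and the paper offers no proof whatsoever. There is therefore nothing to compare your proposal against.

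That said, your outline is a reasonable strategy, and you are candid about the gap. The obstacle you identify --- showing that $\triangle\theta(\bar{\eta},-2/\sqrt{3})<\pi$ for every $\bar{\eta}>0$, or more generally establishing unimodality of $\triangle\theta$ in $\bar{\eta}$ --- is essentially the content of the paper's \emph{other} open conjecture (Conjecture~4.3), which asserts that for $\lambda<0$ there is a single critical energy $\eta_{\mathrm{crit}}(\lambda)$ below which $\triangle\theta$ decreases and above which it increases. So your proof sketch reduces one conjecture to another that the paper also leaves open; this is a legitimate observation, but it does not close the problem. A second point you gloss over: to rule out \emph{all} embedded solutions (not just $m$-symmetric ones with winding number $1$) you also need to exclude closed curves with $\triangle\theta=2\pi n/m$ for $n\geq 2$; this typically follows from a winding-number argument showing such curves must self-intersect, but it should be stated. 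Finally, the step ``prove $\mu(-2/\sqrt{3})>2\pi/3$'' is asserted without a mechanism; the large-$\eta$ bounds in the paper are asymptotic and give no uniform control over intermediate energies, so this would require a genuinely new estimate.
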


\begin{figure}[h]
\centering
\includegraphics[width=30pt]{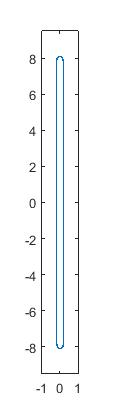}
\includegraphics[width=30pt]{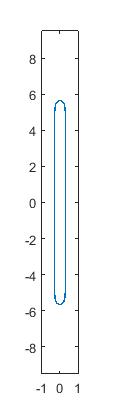}
\includegraphics[width=30pt]{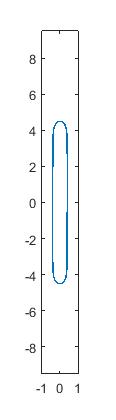}
\includegraphics[width=30pt]{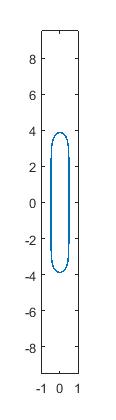}
\includegraphics[width=30pt]{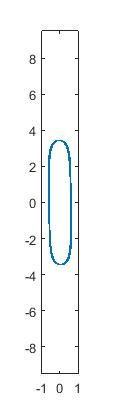}
\includegraphics[width=30pt]{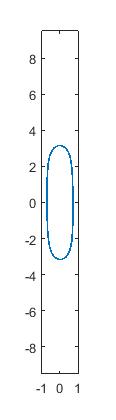}
\includegraphics[width=30pt]{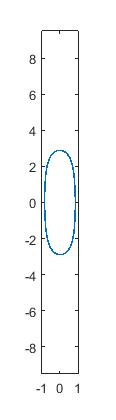}
\includegraphics[width=30pt]{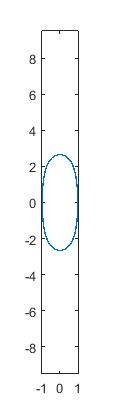}
\caption{Embedded solutions for different $\lambda$'s}
\end{figure}

For $\lambda\leq\frac{-2}{\sqrt{3}}$, we have no embedded solutions with 2-symmetry. However, as $\lambda<\frac{-7}{2\sqrt{2}}$, there are embedded solutions with $m$-symmetry, $m>2$. The following are the cases where $\lambda=$-3, -5. The energy $\eta$ increases from left to right. Unlike the case $\lambda>0$, when $\lambda\leq\frac{-2}{\sqrt{3}}$, even though $\triangle\theta$ should be decreasing near $\min V(B)$, it appears that after some $\eta$, $\triangle\theta$ is increasing while $\eta$ is increasing.

\begin{con}
When $\lambda<0$, there is a function $\eta_{crit}(\lambda)$ such that $\triangle\theta$ is decreasing when $\eta<\eta_{crit}$ and $\triangle\theta$ is increasing when $\eta>\eta_{crit}(\lambda)$. Moreover, $\eta_{crit}(\lambda)$ goes to infinity when $\lambda$ goes to zero, $\eta_{crit}(\lambda)-\min V_\lambda$ goes to zero when $\lambda$ goes to negative infinity.
\end{con}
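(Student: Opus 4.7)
\medskip

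\noindent\textbf{Proof proposal.} The plan is to reduce the conjecture to a single-crossing statement for $\frac{d\triangle\theta}{d\eta}$ and then analyze the location of that crossing in the two asymptotic regimes. First I would produce a manageable integral representation for $\frac{d\triangle\theta}{d\eta}$. Starting from
\begin{equation*}
\triangle\theta=\int_{u_\eta^-}^{u_\eta^+}\frac{2\,du}{\sqrt{\eta-V(u)}}, \qquad V(u)=u^2-2\lambda u-2\log u,
\end{equation*}
I would use the classical Chicone-type substitution: write $\sqrt{\eta-V(u)}=-\frac{2}{V'(u)}\frac{d}{du}(\eta-V(u))^{1/2}$ away from the turning points, integrate by parts, and differentiate in $\eta$ to obtain a representation of the form
\begin{equation*}
\frac{d\triangle\theta}{d\eta}=\int_{u_\eta^-}^{u_\eta^+}\frac{\Phi_\lambda(u)}{(\eta-V(u))^{3/2}}\,du,
\end{equation*}
where $\Phi_\lambda$ is built from $V'$ and $V''$ (essentially the Chicone discriminant applied to this $V$). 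The goal is to show that $\Phi_\lambda$ changes sign exactly once on $(u_\eta^-,u_\eta^+)$ in a way that is compatible with a single sign change of the whole integral as $\eta$ increases.

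Next I would establish existence of $\eta_{crit}(\lambda)$. Proposition~3.2 already gives that $\triangle\theta$ is strictly decreasing in a right neighborhood of $\min V(B)$ for every $\lambda<0$, while Proposition~3.6 and its corollary give $\triangle\theta\to\pi$ as $\eta\to\infty$ with $\triangle\theta<\pi$ from some $\eta_0$ onward when $\lambda<0$. Continuity and the boundary behaviors force $\triangle\theta$ to possess an interior minimum on $(\min V(B),\infty)$, giving at least one zero of $\frac{d\triangle\theta}{d\eta}$.

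The main obstacle is uniqueness of this minimum. My preferred route is to show that any critical point of $\triangle\theta$ is a strict local minimum by proving $\frac{d^2\triangle\theta}{d\eta^2}>0$ at every zero of $\frac{d\triangle\theta}{d\eta}$; this immediately rules out multiple extrema. Concretely, after a further integration by parts on the integral for $\frac{d\triangle\theta}{d\eta}$ one reduces the positivity of $\frac{d^2\triangle\theta}{d\eta^2}$ at a critical $\eta$ to a weighted inequality for the function $u/V'(u)$ on $[u_\eta^-,u_\eta^+]$. Using the explicit form $V'(u)=2u-2\lambda-2/u$ and the convexity $V''=2+2/u^2>0$, I would verify the required sign condition by splitting the interval at the minimizer $u=k^+$ and comparing the contributions from the two halves, exploiting that $u/V'(u)$ is monotone on each half. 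If that direct approach fails, an alternative is to appeal to Coppel--Chicone-type sufficient conditions for period functions, applied after the logarithmic change of variable $u=e^{B/2}$ used earlier in the paper. This uniqueness step is where I expect the bulk of the technical work to sit.

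Finally I would handle the two asymptotics. For $\lambda\to 0^-$, continuity of $V_\lambda$ and its derivatives in $\lambda$ on compact $u$-intervals gives
\begin{equation*}
\frac{d\triangle\theta}{d\eta}(\eta,\lambda)\longrightarrow\frac{d\triangle\theta}{d\eta}(\eta,0)<0
\end{equation*}
uniformly for $\eta$ in any compact subset of $(\min V_0,\infty)$, since the Abresch--Langer case $\lambda=0$ is strictly decreasing. Hence $\eta_{crit}(\lambda)$ cannot remain bounded as $\lambda\uparrow 0$. For $\lambda\to-\infty$, I would use the rescaling $u=w/|\lambda|$, under which $V(u)=2\log|\lambda|+\widetilde V(w)+|\lambda|^{-2}w^2$ with $\widetilde V(w)=2w-2\log w$; the leading-order period in $w$ is \emph{constant} in the rescaled energy $\bar\eta$, so the non-monotonicity is forced by the $O(|\lambda|^{-2})$ perturbation. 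Computing the first correction to $\frac{d\triangle\theta}{d\eta}$ in this expansion, its zero in $\bar\eta$ should converge to $0$ as $|\lambda|\to\infty$, delivering the second asymptotic claim $\eta_{crit}(\lambda)-\min V_\lambda\to 0$. The delicate point here is controlling the singularities at the turning points uniformly in $|\lambda|$, which I would handle by a matched expansion near $w=1$ using the quadratic approximation of $\widetilde V$ at its minimum.
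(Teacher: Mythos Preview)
The statement you are trying to prove is labelled a \emph{conjecture} in the paper, not a theorem. It appears in Section~4, which reports Matlab simulations, and the text explicitly says ``Some conjectures about the behavior are posed here.'' The paper offers no proof whatsoever---its only support is the numerical pictures---so there is nothing to compare your proposal against. You are attempting to settle an open problem, not to reproduce an existing argument.

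As a plan of attack your outline is reasonable in places. The existence of at least one interior critical energy does follow from Proposition~3.2 (decreasing near $\min V$) together with Corollary~3.7 ($\triangle\theta<\pi$ eventually, yet $\triangle\theta\to\pi$), exactly as you say. The $\lambda\to 0^-$ asymptotic is also plausible, modulo making the uniform convergence of $\partial_\eta\triangle\theta$ on compacta rigorous. The uniqueness step, however, remains a genuine gap: you propose to show $\partial_\eta^2\triangle\theta>0$ at every critical energy by splitting the integral at $u=k^+$ and exploiting monotonicity of $u/V'(u)$ on each half, but this is not carried out, and monotonicity on each half is far from sufficient to control a singular weighted integral of Chicone type. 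Your fallback to general Coppel--Chicone criteria is not specific enough to evaluate.

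There is also a concrete error in your $\lambda\to-\infty$ analysis. You assert that for the limiting potential $\widetilde V(w)=2w-2\log w$ the period integral is \emph{constant} in $\bar\eta$; it is not. At $w=1$ one has $\widetilde V''=2$, $\widetilde V'''=-4$, $\widetilde V''''=12$, so the Chicone discriminant $5(\widetilde V''')^2-4\widetilde V''\widetilde V''''=-16\neq 0$, and a direct estimate shows the period grows like $\sqrt{\bar\eta}$ for large $\bar\eta$. Thus $\widetilde V$ is not isochronous, and the mechanism you describe---non-monotonicity coming solely from the $O(|\lambda|^{-2})$ perturbation---cannot be correct as stated. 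Any valid argument for $\eta_{crit}(\lambda)-\min V_\lambda\to 0$ will have to track the actual monotonicity of the leading-order period and locate where the correction term reverses it.
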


\begin{figure}[ht]
\centering
\includegraphics[width=90pt]{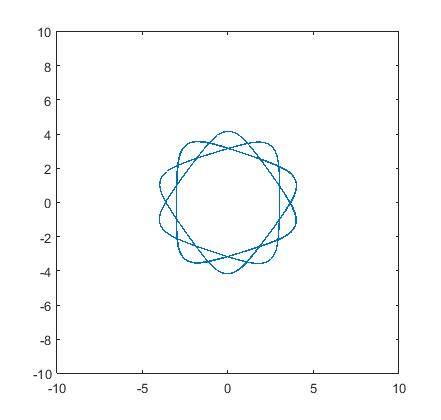}
\includegraphics[width=90pt]{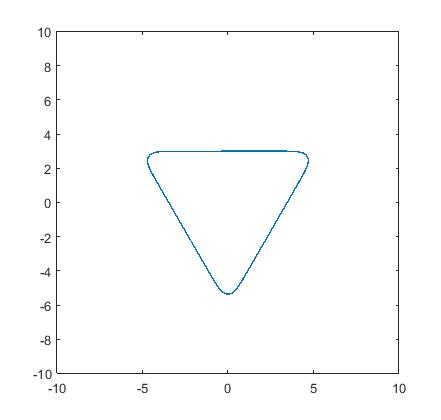}
\includegraphics[width=90pt]{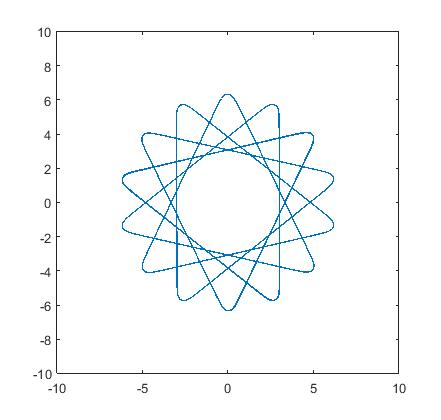}
\includegraphics[width=90pt]{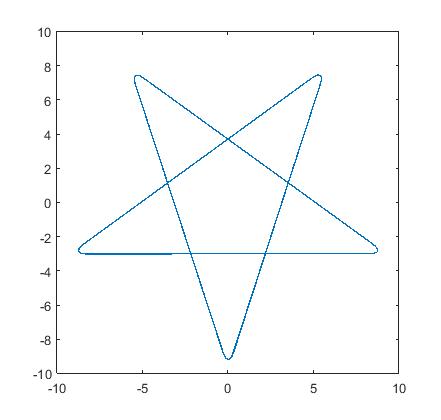}
\caption{Solutions for $\lambda=-3$, $\triangle\theta=\frac{3\pi}{5}$, $\frac{2\pi}{3}$, $\frac{5\pi}{7}$, $\frac{4\pi}{5}$, respectively.}
\end{figure}

\begin{figure}[ht]
\centering
\includegraphics[width=90pt]{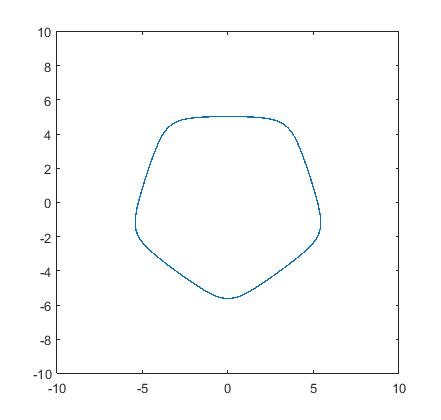}
\includegraphics[width=90pt]{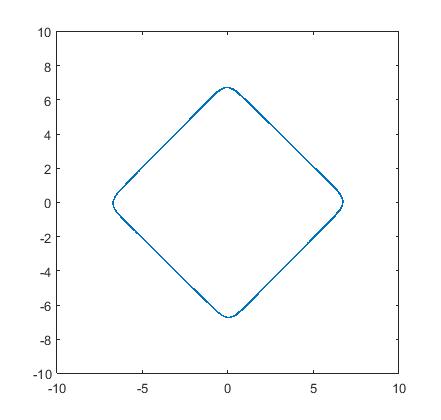}
\includegraphics[width=90pt]{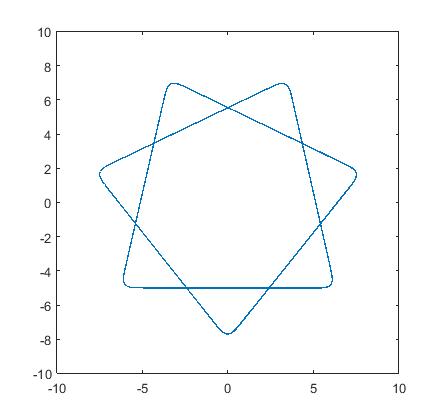}
\includegraphics[width=90pt]{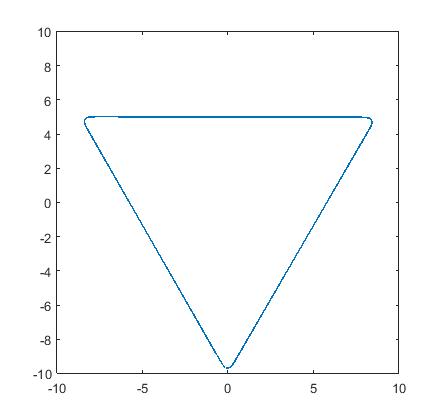}
\caption{Solutions for $\lambda=-5$, $\triangle\theta=\frac{2\pi}{5}$, $\frac{\pi}{2}$, $\frac{4\pi}{7}$, $\frac{2\pi}{3}$, respectively.}
\end{figure}

\bigskip
\bigskip
\bigskip
\bigskip

\end{document}